\numberwithin{equation}{section}
\numberwithin{subsection}{section}
\newenvironment{enumerate1}
{\begin{enumerate}[\upshape (1)]}
{\end{enumerate}}
\newtheorem*{namedtheorem}{\theoremname}
\newcommand{\theoremname}{testing}
\newtheorem{theorem}{Theorem}[section]
\newtheorem{proposition}[theorem]{Proposition}
\newtheorem{proposition-definition}[theorem]
{Proposition-Definition}
\newtheorem{corollary}[theorem]{Corollary}
\newtheorem{lemma}[theorem]{Lemma}
\theoremstyle{definition}
\newtheorem{conjecture}[theorem]{Conjecture}
\newtheorem{example}[theorem]{Example}
\newtheorem{remark}[theorem]{Remark}
\theoremstyle{remark}
\renewcommand{\mathcal}{\mathscr}
 \newcommand\cB{\mathcal{B}}
\newcommand\cO{\mathcal{O}}
 \newcommand\cV{\mathcal{V}}
\newcommand\cW{\mathcal{W}} \newcommand\cX{\mathcal{X}}
\renewcommand\AA{\mathbb{A}} 
\newcommand\CC{\mathbb{C}} 
 \newcommand\FF{\mathbb{F}}
 \newcommand\PP{\mathbb{P}}
\newcommand\QQ{\mathbb{Q}} \newcommand\RR{\mathbb{R}}
 \newcommand\ZZ{\mathbb{Z}}
\newcommand\rC{\mathrm{C}}
 \newcommand\frm{\mathfrak{m}}
 \newcommand\frp{\mathfrak{p}}
\newcommand\arr{\ifinner\to\else\longrightarrow\fi}
\newcommand\arrto{\ifinner\mapsto\else\longmapsto\fi}
\newcommand{\xarr}{\xrightarrow}
\newcommand{\eqdef}{\mathrel{\smash{\overset{\mathrm{\scriptscriptstyle def}} =}}}
\def\displaytimes_#1{\mathrel{\mathop{\times}\limits_{#1}}}
\def\displayotimes_#1{\mathrel{\mathop{\bigotimes}\limits_{#1}}}
\newcommand\aut{\operatorname{Aut}}
\newcommand\spec{\operatorname{Spec}}
\newcommand\id{\mathrm{id}}
\newcommand\indlim{\varinjlim}
\newlength{\ignora}
\newcommand{\SL}{\mathrm{SL}}
\newcommand{\PGL}{\mathrm{PGL}}
\newcommand{\PSL}{\mathrm{PSL}}
\newcommand{\dr}[1]{(\mspace{-3mu}(#1)\mspace{-3mu})}
\newcommand{\ds}[1]{[\mspace{-2mu}[#1]\mspace{-2mu}]}
\DeclareFontFamily{U}{mathx}{\hyphenchar\font45}
\DeclareFontShape{U}{mathx}{m}{n}{
      <5> <6> <7> <8> <9> <10>
      <10.95> <12> <14.4> <17.28> <20.74> <24.88>
      mathx10
      }{}
\DeclareSymbolFont{mathx}{U}{mathx}{m}{n}
\DeclareMathAccent{\widecheck}{0}{mathx}{"71}
\DeclareMathAccent{\wideparen}{0}{mathx}{"75}
\renewcommand{\epsilon}{\varepsilon}
\newcommand{\ed}{\operatorname{ed}}
\newcommand{\edloc}{\operatorname{ed}^{\rm loc}}
\newcommand{\cha}{\operatorname{char}}
\newcommand{\trdeg}{\operatorname{trdeg}}
\renewcommand{\geq}{\geqslant}
\begin{document}

\title[Essential dimension in mixed characteristic]{Essential dimension\\in mixed characteristic}

\author{Patrick Brosnan}
\address[Brosnan]{Department of Mathematics\\
1301 Mathematics Building\\
University of Maryland\\
College Park, MD 20742-4015\\
USA}
\email{pbrosnan@umd.edu}
\thanks{Patrick Brosnan was partially supported by 
NSF grant DMS 1361159.}

\author{Zinovy Reichstein}
\address[Reichstein]{Department of Mathematics\\
1984 Mathematics Road\\
University of British Columbia\\
Vancouver, BC V6T 1Z2\\Canada}
\email{reichst@math.ubc.ca}
\thanks{Zinovy Reichstein was partially supported by
National Sciences and Engineering Research Council of
Canada Discovery grant 253424-2017.}

\author{Angelo Vistoli}
\address[Vistoli]{Scuola Normale Superiore\\Piazza dei Cavalieri 7\\
56126 Pisa\\ Italy}
\email{angelo.vistoli@sns.it}
\thanks{Angelo Vistoli was partially supported by research funds from the Scuola Normale Superiore.}
\thanks{The authors are grateful to the Collaborative Research Group in Geometric and Cohomological Methods in Algebra at the Pacific Institute for the Mathematical Sciences for their support of this project.}

\keywords{Essential dimension, Ledet's conjecture, genericity theorem, gerbe, mixed characteristic}
\subjclass[2010]{14A20, 13A18, 13A50}


\begin{abstract}
  Let $G$ be a finite group, and let $R$ be a discrete valuation ring
  with residue field $k$ and fraction field $K$.  We say that $G$ is
  weakly tame at a prime $p$ if it has no non-trivial normal
  $p$-subgroups. By convention, every finite group is weakly tame at
  $0$.
  Using this definition, we show that
  if $G$ is weakly tame at $\cha(k)$, then
  $\ed_K(G) \geqslant \ed_k(G)$. Here $\ed_F(G)$ denotes the essential
  dimension of $G$ over the field $F$. We also prove a more general
  statement of this type, for a class of \'etale gerbes $\cX$ over
  $R$.

As a corollary, we show that if $G$ is weakly tame at $p$, then $\ed_{L} G  \geqslant \ed_{k} G$ for any field $L$ of characteristic $0$ 
and any field $k$ of characteristic $p$, provided that $k$ contains $\overline{\FF}_{p}$. We also show that a conjecture of A.~Ledet, asserting that
$\ed_k(\ZZ/p^n \ZZ) = n$ for a field $k$ of characteristic $p > 0$ implies that $\ed_{\CC}(G) \geqslant n$ for any finite group $G$ which 
is weakly tame at $p$ and contains an element of order $p^n$. We give a number of examples, where an unconditional proof of the last inequality is
out of the reach of all presently known techniques. 
\end{abstract}

\maketitle

\section{Introduction}

Let $R$ be a discrete valuation ring with residue field $k$ and fraction field $K$, 
and let $G$ be a finite group. In this paper we will compare $\ed_K(G)$ and $\ed_k(G)$.
More generally, we will compare $\ed_K(\cX)$ to $\ed_k(\cX)$ for an \'etale gerbe 
$\cX$ over $R$. For an overview of the theory of essential dimension, we refer the reader
to~\cite{brosnan-reichstein-vistoli3, merkurjev-survey, reichstein-icm}.

To state our main result, we will need some definitions.

Suppose $S$ is a scheme. By an \emph{\'etale gerbe} $\cX \arr S$ we mean an algebraic 
stack that is a gerbe in the \'etale topology on $S$. Furthermore, we will 
always assume that there exists an \'etale covering $\{S_{i} \arr S\}$, 
such that the pullback $\cX_{S_{i}}$ is of the form $\cB_{S_{i}}G_{i}$, 
where $G_{i} \arr S_{i}$ is a finite \'etale group scheme. 

We say that a finite group $G$ is \emph{tame} (resp. \emph{weakly tame}) at a prime number $p$ if $p\nmid |G|$ (resp. 
$G$ contains no non-trivial normal $p$-subgroup).
Equivalently, $G$ is tame at $p$ if the trivial group is the (unique) $p$-Sylow subgroup of $G$, and $G$ is weakly
tame at $p$ if the intersection of all $p$-Sylow
subgroups of $G$ is trivial.
By convention we say that every finite group is both tame and weakly tame at $0$.
\footnote{By a theorem of T.~Nakayama~\cite{nakayama}, $G$ is weakly tame at 
$p$ if and only if $G$ admits a faithful completely reducible 
representation over some (and thus every) field of characteristic $p$.
The significance of this condition in the study of essential dimension of 
finite groups was first observed by R.~L\"otscher~\cite{lotscher}. Note 
that L\"otscher used the term ``semifaithful" in place of ``weakly tame".}

By a geometric point of $S$, we mean a morphism $\spec\Omega \arr S$ with $\Omega$ an algebraically closed field.    We say that a finite \'etale
group scheme $G$ over $S$ is \emph{tame} (resp. \emph{weakly tame}) if, for every geometric point $\spec\Omega\to S$, the group $G(\Omega)$ is 
tame (resp. weakly tame) at $\cha\Omega$.  Similarly, we say that an \'etale gerbe $\cX \arr S$ is \emph{tame} (resp. \emph{weakly tame}) if, for every object $\xi$ over a geometric point $\spec\Omega \arr S$,
the automorphism group 
$\aut_{\Omega}\xi$ is tame (resp. weakly tame) at $\cha\Omega$.

A key result of \cite{brosnan-reichstein-vistoli3} is the so 
called \emph{Genericity Theorem} for tame Deligne--Mumford stacks, \cite[Theorem 6.1]{brosnan-reichstein-vistoli3}.  
The proof of this result in~\cite{brosnan-reichstein-vistoli3} was based on the following.   

\begin{theorem}[\hbox{\cite[Theorem~5.11]{brosnan-reichstein-vistoli3}}]\label{thmbrv3}
Let $R$ be a discrete valuation ring (DVR) with residue field $k$ and fraction field $K$, and let
\[ \cX\arr \spec R \]
be a tame \'etale gerbe. Then
$\ed_{K}\cX_K  \geqslant \ed_{k}\cX_{k}$.
\end{theorem}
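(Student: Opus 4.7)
The strategy is to prove $\ed_K\cX_K \geq \ed_k\cX_k$ by a lift-and-specialize argument. Set $d := \ed_k\cX_k$ and pick an object $\xi_k \in \cX_k(E)$, over a field extension $E/k$, attaining the essential dimension: $\ed_k(\xi_k)=d$. The goal is to exhibit an object of $\cX_K$ of essential dimension at least $d$.

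First I would build a complete DVR $R'$ dominating $R$ whose residue field is $E$ and whose fraction field $L$ contains $K$; for instance, take a smooth $R$-scheme $Y$ of relative dimension $\trdeg_k E$ with a closed point on the special fiber having residue field $E$, and let $R'$ be the completion of the local ring at that point. The main step is then to lift $\xi_k \in \cX(E)$ to an object $\xi_{R'} \in \cX(R')$, and this is where the tame hypothesis does real work. After an \'etale cover of $\spec R'$ one may assume $\cX_{R'} \simeq \cB_{R'} G$ for $G$ a finite \'etale group scheme of order invertible on $R'$; the obstructions to lifting a $G$-torsor step-by-step through the infinitesimal thickenings $\spec R'/\frm^{n}$ live in cohomology groups of the closed point whose tame coefficients vanish, so the lifts exist, and Grothendieck's existence theorem algebraizes the resulting formal object to a genuine $\xi_{R'}$.

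Next I would set $\xi_L := \xi_{R'}|_L \in \cX_K(L)$ and establish the specialization inequality $\ed_K(\xi_L) \geq \ed_k(\xi_k)$. Suppose $\xi_L$ descends to some $\eta_0 \in \cX_K(L_0)$ with $\trdeg_K L_0 = e$, and put $A := L_0 \cap R'$; this is a valuation ring dominated by $R'$, with residue field $E_0 \subseteq E$ of transcendence degree at most $e$ over $k$. Using tameness once more---so that $\eta_0$ extends, up to a tame \'etale cover of $A$, to an object of $\cX(A)$---reduction modulo the maximal ideal of $A$ produces an object of $\cX_k(E_0)$ whose pullback to $E$ is isomorphic to $\xi_k$. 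Hence $\xi_k$ itself descends to $E_0$, giving $d = \ed_k(\xi_k) \leq \trdeg_k E_0 \leq e = \ed_K(\xi_L) \leq \ed_K\cX_K$, as required.

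The chief obstacle is the two-way lifting of objects across the generic/special fiber divide: one must lift $\xi_k$ from the special fiber to $R'$ and, dually, lift the descended object $\eta_0$ from $L_0$ to its integral model $A$. Both rest on the vanishing or controllability of deformation-theoretic cohomology in the tame setting, which is precisely what the hypothesis that $\cX$ is a \emph{tame} \'etale gerbe provides; without it, torsors in wild characteristic can fail to lift, and the whole inequality can be violated, as Ledet's conjecture already suggests.
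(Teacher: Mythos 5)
The main gap is the assertion that ``$\eta_0$ extends, up to a tame \'etale cover of $A$, to an object of $\cX(A)$.'' Tameness alone does not give this: a torsor under a finite group of order prime to the residue characteristic can be tamely ramified over the valuation ring $A$ and yet not extend to a torsor over $\spec A$. (Kummer torsors $y^\ell = \pi$, $\pi$ a uniformizer, are the standard obstruction.) The fact that $\xi_L = \eta_0|_L$ happens to extend over $R'$ does not force $\eta_0$ itself to extend over $A$ -- the ramification over $A$ can be killed by the ramified extension $R'/A$. This extension step is exactly the technical heart of the paper's proof, and it is established there only for a very particular compression: the versal object $\xi$ is realized as the function field of a vector bundle $\cV \arr \cX$ over $R$, so the torsor $\widetilde L/L$ sits inside the function field $\widetilde E$ of $V_R$, the ring $\cO_{\widetilde L} := \widetilde L \cap \cO_{\widetilde E}$ is a $\Gamma$\dash invariant DVR in which the uniformizer $t$ of $R$ is still a ($\Gamma$\dash invariant) uniformizer, and Proposition~\ref{prop.faithful} (via Lemma~\ref{lem.bur5.1}) is then invoked to show the induced $\Gamma$\dash action on the residue field is faithful, hence the action on $\spec\cO_{\widetilde L}$ is free and the quotient is a torsor. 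Your proposal has no analogue of this mechanism: the compression $L_0$ is arbitrary, the valuation on $L_0$ coming from $R'$ need not have $t$ as a uniformizer, and no version of Proposition~\ref{prop.faithful} is invoked. Without that input, the extension you need can simply fail.

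A secondary point: you locate the ``real work'' of tameness in the lift of $\xi_k$ from $E$ to $R'$, but that step does not actually need tameness. For $R'$ complete (hence Henselian) local with residue field $E$, finite \'etale covers of $\spec R'$ are equivalent to finite \'etale covers of $\spec E$, and the same Henselian base-change argument handles the gerbe class in $H^2$. There is no infinitesimal obstruction theory to run -- torsors under a finite \'etale group scheme do not deform over Artinian thickenings of $E$ -- so the tame-coefficient-vanishing and Grothendieck-existence steps are not doing anything. The place where (weak) tameness genuinely enters the argument is in the residual faithfulness statement (Proposition~\ref{prop.faithful}), which is precisely the step your sketch leaves unjustified.
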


Here $\cX_{K}$ and $\cX_{k}$ are respectively the generic fiber and the special fiber of $\cX \arr \spec R$.

Unfortunately, the proof of \cite[Theorem~5.11]{brosnan-reichstein-vistoli3} contains an error in the case when $\cha K = 0$ and $\cha k > 0$.  
This was noticed by Amit Hogadi, to whom we are very grateful. 
(See Remark~\ref{remex} for an explanation of the error.)
For the applications 
in \cite{brosnan-reichstein-vistoli3} only the equicharacteristic case 
was needed, so this mistake in the proof of Theorem~\ref{thmbrv3} does not affect any other results in~\cite{brosnan-reichstein-vistoli3} (the genericity theorem, in particular).
However, the assertion of Theorem~\ref{thmbrv3} in the mixed characteristic case remained of interest to us, as a way of
relating essential dimension in positive characteristic to essential dimension in characteristic $0$. 
In this paper, our main result  is the following strengthened version of Theorem~\ref{thmbrv3}.

\begin{theorem} \label{thm.main}
Let $R$ be a DVR with residue field $k$ and fraction field $K$, and let 
\[ \cX\arr \spec R \]
be a weakly tame \'etale gerbe. Then
$\ed_{K}\cX_K  \geqslant \ed_{k}\cX_{k}$.
\end{theorem}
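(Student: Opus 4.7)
The plan is to adapt the argument of Theorem~\ref{thmbrv3} for tame gerbes, replacing the tameness hypothesis at each step with the weakly tame hypothesis combined with Nakayama's theorem. Recall from the footnote that $G$ weakly tame at $p$ is equivalent to $G$ admitting a faithful completely reducible representation over every field of characteristic $p$; this is the new algebraic input that the tame argument did not require.

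After suitable local reductions---trivializing the gerbe \'etale-locally on $R$ and keeping track of descent data carefully---it suffices to treat the case $\cX \simeq \cB_R G$ for a finite constant group $G$ that is weakly tame at $p = \cha k$. The task then becomes: given a versal $G_K$-torsor $E_K \arr Y_K$ realizing $\ed_K G = \dim Y_K$, produce a versal $G_k$-torsor over $k$ on a variety of dimension at most $\dim Y_K$.

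By Nakayama's theorem, $G$ admits a faithful completely reducible representation $V_0$ of some dimension $n$ over $k$. The critical technical step is to lift $V_0$ to a faithful representation $V$ of $G$ on a free $R$-module of rank $n$. The obstructions to such a lift live in the $\ext^{2}$ groups of the simple $k[G]$-summands of $V_0$, and under the weakly tame hypothesis they can be trivialized (after a possibly harmless enlargement of $R$) using complete reducibility and the fact that simple weakly tame modules deform. Once $V$ is obtained, the open subscheme $U \subset V$ on which $G$ acts freely yields a versal $G$-torsor $U \arr U/G$ over $R$, with versal fibers on both ends. The versal torsor $E_K \arr Y_K$ induces a dominant rational map from $Y_K$ to $(U/G)_K$; spreading this map out over a proper flat $R$-model of $Y_K$ and specializing at $k$ yields a rational map from the (equidimensional) $k$-fiber to $(U/G)_k$, and pulling back the tautological $G$-torsor gives the desired versal $G_k$-torsor of dimension at most $\dim Y_K$.

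The main obstacle is twofold: first, the faithful lifting of $V_0$ from $k$ to $R$ in mixed characteristic, which is where the weakly tame hypothesis enters as an essential (rather than cosmetic) ingredient; and second, ensuring that the specialization of the dominant rational map remains dominant. For the second point, one must exploit the fact that $G$ has no non-trivial normal $p$-subgroup to rule out pathological ``$p$-collapsing'' of torsors across the special fiber---roughly, the weakly tame hypothesis prevents the $G_k$-torsor from degenerating to one controlled by a strictly smaller quotient group in which the $p$-part has been absorbed. Once these two points are settled, the remaining geometric arguments run in close parallel with the tame case treated in Theorem~\ref{thmbrv3}, and the inequality $\ed_K \cX_K \geq \ed_k \cX_k$ follows.
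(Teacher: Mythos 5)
Your proposal contains a fatal gap at the lifting step. You claim that a faithful completely reducible representation $V_0$ of dimension $n$ over $k$ can be lifted to a faithful representation on a free $R$-module of rank $n$, with obstructions in $\ext^2$ trivialized by weak tameness. This is false, and the paper's own Example~\ref{ex.equality} gives a counterexample: $G = A_5 \simeq \SL_2(\FF_4)$ is weakly tame at $p=2$ and has a faithful $2$-dimensional completely reducible representation over any field containing $\FF_4$, but $A_5$ has no faithful $2$-dimensional representation in characteristic $0$ (its smallest faithful complex representation is $3$-dimensional), so no lift of rank $2$ exists. There is no general lifting theorem for modular representations of weakly tame groups, and the $\ext^2$ obstruction argument you sketch is not a theorem. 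The paper avoids this issue entirely: it works with the pushforward of $\cO_{\spec A}$ for a finite \'etale $R$-algebra $A$ (in the constant case this is simply the regular representation $V_R = (\AA_R)^{|G|}$), which is automatically defined over $R$ and faithful on both fibers. Nakayama's theorem appears in the paper only as motivation in a footnote and plays no role in the proof.

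Beyond the lifting step, the actual mechanism by which the weakly tame hypothesis is used is quite different from the vague ``$p$-collapsing'' picture you describe. The paper's argument is valuation-theoretic: starting from a field of definition $L$ over $K$ of minimal transcendence degree, one forms the DVR $\cO_L = \cO_E \cap L$ and needs the $G$-action to remain faithful on the residue field. Lemma~\ref{lem.bur5.1} shows that any automorphism of a valued field acting trivially on both $\cO/\frm$ and $\frm/\frm^2$ has order a power of $p$; hence (Proposition~\ref{prop.faithful}) the kernel of the reduction map $G \to \aut(l)$ is a normal $p$-subgroup, which weak tameness forces to be trivial. This is where the hypothesis does its work. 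Your proposed ``spread out the dominant map and specialize'' step is also worrying: the direction of this kind of specialization is exactly where the error in the original \cite[Theorem~5.11]{brosnan-reichstein-vistoli3} crept in (see Remark~\ref{remex}), so asserting that versality survives specialization without the valuation-theoretic control is not safe.
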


In particular,~\cite[Theorem~5.11]{brosnan-reichstein-vistoli3} is valid as stated. 
Moreover, our new proof is considerably shorter than the one 
in~\cite{brosnan-reichstein-vistoli3}.
And in Sections~\ref{sect.cors}-\ref{sect.psl2} we will deduce some rather surprising consequences.

We will give two proofs of our main result, one for gerbes of the form 
where $\cX = \cB_{R}G$, where $G$ is a (constant) finite group
(Theorem~\ref{thm.main0}) and the other for the general case. 
The ideas in these two proofs are closely related; 
the proof of Theorem~\ref{thm.main0} allows us to
introduce these ideas in  
the elementary setting of classical valuation theory. 
A separate proof of Theorem~\ref{thm.main0} also 
makes the applications in Sections~\ref{sect.cors}-\ref{sect.psl2} accessible to those readers who are not 
familiar with, or don't care for, the language of gerbes.

\subsection*{Acknowledgements} 
We are grateful to the referee for a thorough reading and constructive suggestions. We would also like to thank
Alexander Duncan and Najmuddin Fakhruddin for helpful comments on an earlier version of this paper.

\section{Proof of Theorem~\ref{thm.main} in the constant case}
\label{sect.const}

In this section we will prove a special case of Theorem~\ref{thm.main}, 
where $\cX = \cB_{R}G$
for $G$ a finite group (viewed as a constant group scheme over $\spec R$); see Theorem~\ref{thm.main0}.

Throughout this section we will assume that $L$ is a field equipped with a (surjective) discrete valuation $\nu \colon L^* \to \ZZ$ and 
$K$ is a subfield of $L$ such that $\nu(K^*) = \ZZ$. We will denote the residue fields of $L$ and $K$ by $l$ and $k$, respectively.  Similarly, we will denote the valuation rings by $\mathcal{O}_L$ and $\mathcal{O}_K$.

The following lemma is a special case of the Corollary to Theorem 1.20 in~\cite{vaquie}.  For the convenience of the reader, we supply a short proof.

\begin{lemma} \label{lem.valuation} $\trdeg_k(l) \leqslant \trdeg_K(L)$.
\end{lemma}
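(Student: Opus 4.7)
The plan is the classical Abhyankar-style lifting argument. If $\trdeg_K(L) = \infty$ there is nothing to prove, so I may as well show that any finite collection of elements $\bar{y}_{1},\dots,\bar{y}_{n}\in l$ which is algebraically independent over $k$ lifts to a collection $y_{1},\dots,y_{n}\in L$ which is algebraically independent over $K$.

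First I would choose arbitrary lifts $y_{i}\in\mathcal{O}_{L}$ of $\bar{y}_{i}$ along the reduction map $\mathcal{O}_{L}\twoheadrightarrow l$, which exists since each $\bar{y}_{i}\in l$ lies in the residue field of the valuation ring. The main claim is that these $y_{i}$ are algebraically independent over $K$. Suppose, for contradiction, that there is a nonzero polynomial $f\in K[x_{1},\dots,x_{n}]$ with $f(y_{1},\dots,y_{n})=0$.

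The key normalization step uses the hypothesis that $\nu(K^{*})=\ZZ$: pick a uniformizer $\pi\in K^{*}$ with $\nu(\pi)=1$, let $m$ be the minimum of $\nu(c)$ over the finitely many nonzero coefficients $c$ of $f$, and replace $f$ by $\pi^{-m}f$. This new polynomial still vanishes on $(y_{1},\dots,y_{n})$, has all coefficients in $\mathcal{O}_{K}$, and has at least one coefficient which is a unit in $\mathcal{O}_{K}$. Reducing modulo the maximal ideal of $\mathcal{O}_{K}$ produces a nonzero polynomial $\bar{f}\in k[x_{1},\dots,x_{n}]$. Because the reduction map $\mathcal{O}_{L}\to l$ extends the reduction map $\mathcal{O}_{K}\to k$, applying it to the identity $f(y_{1},\dots,y_{n})=0$ yields $\bar{f}(\bar{y}_{1},\dots,\bar{y}_{n})=0$, contradicting the algebraic independence of the $\bar{y}_{i}$ over $k$.

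The only real content is the rescaling trick in the middle paragraph; without the surjectivity $\nu(K^{*})=\ZZ$ one could not guarantee that the normalized $f$ has a coefficient of valuation exactly $0$, and the reduction could be zero. Everything else (existence of lifts, compatibility of reduction maps, the contradiction) is formal.
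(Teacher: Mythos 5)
Your proof is correct and follows essentially the same route as the paper's own proof: lift the algebraically independent elements, suppose a dependence relation, normalize the coefficients so the reduction is a nonzero polynomial, and derive a contradiction. The only cosmetic difference is that you spell out the rescaling by a uniformizer $\pi^{-m}$, where the paper simply says ``after clearing denominators.''
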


\begin{proof} 
Let $u_{1}, \ldots, u_{m} \in l$ be algebraically independent over $k$. Lift each $u_i$ to $v_i \in \cO_{L} \subseteq L$. It now suffices to show that $v_1, \dots, v_m$ are algebraically independent over $K$. Assume the contrary: 
$f(v_{1}, \dots , v_{m}) = 0$ for some polynomial $0 \neq f(x_1, \dots, x_m) \in K[x_{1}, \dots, x_{m}]$. After clearing denominators 
we may assume that every coefficient of $f$ lies in 
$\cO_{K}$, and at least one of the coefficients has valuation~$0$. 
If $f_{0}$ is the image of $f$ in $k[x_{1}, \dots, x_{m}]$ then 
$f_{0} \neq 0$ and $f_{0}(u_{1}, \dots, u_{m}) = 0$. This contradicts 
our assumption that $u_1, \dots, u_m$ are algebraically independent over $k$.
\end{proof}

Let $L_m = \nu^{-1}(m) \cup \{ 0 \}$ and $L_{\geqslant m} = \bigcup_{j \geqslant m} \, L_j$. Note that, by definition,
$L_{\geqslant 0} = \cO_L$ is the valuation ring of $\nu$, 
$L_{\geqslant 1}$ is the maximal ideal, 
and $L_{\geqslant 0}/L_{\geqslant 1} = l$
is the residue field.

\begin{lemma} \label{lem.bur5.1}
Assume that $g$ is an automorphism of $L$ of finite order $d \geqslant 1$, preserving the valuation $\nu$. Let $p = \cha(l) \geqslant 0$.
If $g$ induces a trivial automorphism on both $L_{\geqslant 0}/L_{\geqslant 1}$ and $L_{\geqslant 1}/L_{\geqslant 2}$, then

\smallskip
(a) $d = 1$ (i.e., $g = \id$ is the identity automorphism) if $p = 0$, and

\smallskip
(b) $d$ is a power of $p$, if $p > 0$.
\end{lemma}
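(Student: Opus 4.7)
The plan is to propagate the given triviality of $g$ on the first two graded pieces up the entire filtration, and then to exploit this to pin down the order $d$. Fixing a uniformizer $\pi$, so that $L_{\geq n} = \pi^n \cO_L$, I would first show by induction on $n$ that $g$ acts trivially on every graded piece $L_{\geq n}/L_{\geq n+1}$, equivalently that $g(z) - z \in L_{\geq \nu(z) + 1}$ for every nonzero $z \in \cO_L$. The cases $n \in \{0,1\}$ are the hypotheses; for $n \geq 2$, writing any $x \in L_{\geq n}$ as $x = \pi y$ with $y \in L_{\geq n-1}$ and expanding
\[
g(x) \;=\; g(\pi)\, g(y) \;=\; (\pi + c)(y + e)
\]
with $c \in L_{\geq 2}$ (from the case $n=1$) and $e \in L_{\geq n}$ (from the inductive hypothesis on $y$), each of the cross terms $\pi e$, $c y$, $c e$ lies in $L_{\geq n+1}$.

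Next, assuming $g \neq \id$ (so some $x \in \cO_L$ has $g(x) \neq x$), I would set $e := g(x) - x \neq 0$ and telescope:
\[
g^k(x) - x \;=\; \sum_{j=0}^{k-1}\bigl(g^{j+1}(x) - g^j(x)\bigr) \;=\; \sum_{j=0}^{k-1} g^j(e) \;=\; ke + E_k,
\]
where $E_k \in L_{\geq \nu(e)+1}$: each iterate $g^j$ inherits the two triviality hypotheses and $\nu$-preservation from $g$, so by the first paragraph applied to $g^j$ one has $g^j(e) \equiv e \pmod{L_{\geq \nu(e)+1}}$. Setting $k = d$ and using $g^d = \id$ forces $de \in L_{\geq \nu(e)+1}$, and hence $\nu(d) > 0$.

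For part (a), $\cha l = 0$ makes every positive integer a unit of $\cO_L$, contradicting $\nu(d) > 0$; hence no such $x$ can exist, so $g = \id$ and $d = 1$. For part (b), $\cha l = p > 0$ together with $\nu(d) > 0$ forces $p \mid d$, because an integer lies in the maximal ideal of $\cO_L$ exactly when it reduces to $0$ in $l$. To upgrade this to ``$d$ is a power of $p$'', I would write $d = p^a m$ with $\gcd(m,p) = 1$ and, supposing $m > 1$, reapply the whole argument to the automorphism $h := g^{p^a}$, which has order $m$ and still satisfies both triviality hypotheses and $\nu$-preservation; the resulting conclusion $p \mid m$ contradicts $\gcd(m,p) = 1$, so $m = 1$.

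The substance of the argument lies in the inductive propagation of the first paragraph; everything after it is formal. The subtlety worth flagging is the two-step structure of part (b): the telescoping identity alone delivers only ``$p \mid d$'', and one must re-invoke the whole argument on the power $g^{p^a}$ in order to conclude that $d$ is a prime power. Beyond that, the proof is a hands-on incarnation of the classical fact from local field theory that the higher ramification groups are pro-$p$.
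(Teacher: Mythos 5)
Your proof is correct, and it takes a genuinely different route from the paper's. Both arguments begin by propagating the triviality of $g$ from the first two graded pieces to all of them (you do this by an elementary multiplicative induction with a fixed uniformizer; the paper does it by observing that multiplication by $t^{i-1}$ gives a $G$\dash equivariant isomorphism $(L_{\geq 1}/L_{\geq 2})^{\otimes i} \simeq L_{\geq i}/L_{\geq i+1}$ --- the same computation, differently packaged). The divergence is in the next step. The paper first reduces to $\gcd(d,p)=1$ by replacing $g$ with $g^{p^r}$, observes that the cyclic group $G=\langle g\rangle$ is linearly reductive because $|G|$ is a unit in $\cO_L$, and uses this (via the short exact sequences $0 \to L_{\geq i}/L_{\geq i+1} \to L_{\geq 0}/L_{\geq i+1} \to L_{\geq 0}/L_{\geq i} \to 0$ and separatedness $\bigcap_i L_{\geq i} = 0$) to climb the tower and conclude that $G$ acts trivially on $\cO_L$. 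You instead pick a single witness $x$ with $e := g(x)-x \neq 0$, telescope $0 = g^d(x)-x = de + E_d$ with $E_d \in L_{\geq \nu(e)+1}$, and read off $\nu(d)>0$, hence $p \mid d$; you then iterate once on $h = g^{p^a}$ to get the full prime-power statement. Both arguments ultimately rest on the same fact --- an integer prime to $p$ is a unit in $\cO_L$ --- but yours dispenses with the representation-theoretic machinery (vanishing of $H^1(G,-)$ / splitting of the extensions) and the induction up the tower of quotients, working instead with one element. This is precisely the classical Hilbert-ramification argument, as you note at the end, and is if anything a bit shorter and more self-contained than the paper's proof; the paper's version has the advantage of fitting naturally into the linearly-reductive framework used elsewhere in the article. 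One small organizational difference worth noting: the paper reduces to the coprime case \emph{before} arguing, while you derive the divisibility $p \mid d$ \emph{first} and only then reduce, but the logical content is the same.
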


Part (a) is proved in~\cite[Lemma 5.1]{bur}; a minor variant of
the same argument
also proves (b).  Alternatively, with some additional work, 
Lemma~\ref{lem.bur5.1} can be deduced from~\cite[Theorem 25, p. 295]{zs1}.  
For the reader's convenience we will give a short self-contained 
proof below.

\begin{proof} In case (b), write $d = mp^{r}$, where $m$ is not divisible 
by $p$. After replacing $g$ by $g^{p^{r}}$, we may assume that $d$ is prime 
to $p$. In both parts we need to conclude that $g$ is the identity.

Let $G$ be the cyclic group generated by $g$; then $G$ 
is linearly reductive. Since the action of $G$ on $l$  is trivial, 
the induced action on $L_{\geq i}/L_{\geq i+1}$ is $l$-linear. 
Furthermore, let $t \in L_1$ be a uniformizing parameter. By our assumption
$g(t) = t \pmod{L_{\geqslant 2}}$. Thus multiplication by $t^{i-1}$ induces
the $l$-linear $G$-equivariant 
isomorphism $(L_{\geqslant 1}/L_{\geqslant 2})^{\otimes i} 
\simeq L_{\geq i}/L_{\geq i+1}$. Consequently, $G$ acts trivially 
on $L_{\geq i}/L_{\geq i+1}$ for all $i \geq 0$. Since $G$ is linearly 
reductive, from the exact sequence
   \[
   0 \arr L_{\geq i}/L_{\geq i+1} \arr L_{\geq 0}/L_{\geq i+1} \arr L_{\geq 0}/L_{\geq i} \arr 0
   \]
we deduce, by induction on $i$, that $G$ acts trivially on 
$L_{\geq 0}/L_{\geq i}$ for every $i \geqslant 1$. Since $\bigcap_{i \geq 0} L_{i} = 0$, this implies that the action of $G$ on $L_{\geq 0}$ is trivial. But $L_{\geq 0}$ is a domain with quotient field $L$, so $G$ also acts trivially on $L$. Since $G$ acts 
faithfully on $L$, we conclude that $G = \{1\}$, and the lemma follows.
\end{proof}

\begin{proposition} \label{prop.faithful} Consider a faithful action of a finite group $G$ on $L$, such that
$G$ preserves $\nu$ and acts trivially on $K$.  Let $\Delta$ be the kernel of the induced 
$G$-action on $l$. Then $\Delta = \{ 1 \}$ if $\cha(k) = 0$ and $\Delta$ is a $p$-subgroup if $\cha(k) = p$.
\end{proposition}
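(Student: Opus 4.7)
My plan is to reduce the claim to Lemma~\ref{lem.bur5.1} by exploiting the fact that the hypothesis $\nu(K^*)=\ZZ$ gives a uniformizer of $L$ that lies in the fixed field $K$.

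First I would observe that $\Delta$, being a subgroup of the finite group $G$, is itself finite; so it suffices to show that every $g\in\Delta$ satisfies the conclusion of Lemma~\ref{lem.bur5.1}, i.e., $g=1$ when $\cha(k)=0$ and $g$ has $p$-power order when $\cha(k)=p>0$. Each $g\in\Delta$ preserves the valuation $\nu$ and, by definition of $\Delta$, acts trivially on $l=L_{\geqslant 0}/L_{\geqslant 1}$. Thus Lemma~\ref{lem.bur5.1} applies to $g$ as soon as I verify that $g$ also acts trivially on $L_{\geqslant 1}/L_{\geqslant 2}$.

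The key step is the verification of this triviality. Since $\nu(K^{*})=\ZZ$, I can pick $\pi\in K$ with $\nu(\pi)=1$; then $\pi$ is a uniformizer for $L$ as well, so its image $\bar\pi\in L_{\geqslant 1}/L_{\geqslant 2}$ is an $l$-basis of this one-dimensional $l$-vector space. Because $g$ acts trivially on $l$, its induced action on $L_{\geqslant 1}/L_{\geqslant 2}$ is $l$-linear and is therefore determined by a scalar $\chi(g)\in l^{*}$ via $g\cdot\bar\pi = \chi(g)\bar\pi$. But $g$ fixes $\pi$ (as $g$ acts trivially on $K$), hence fixes $\bar\pi$, so $\chi(g)=1$. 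Consequently $g$ acts trivially on $L_{\geqslant 1}/L_{\geqslant 2}$, and Lemma~\ref{lem.bur5.1} yields the desired conclusion for $g$.

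Finally, assembling the pointwise statements: if $\cha(k)=0$, every $g\in\Delta$ is the identity, so $\Delta=\{1\}$; if $\cha(k)=p>0$, every element of the finite group $\Delta$ has $p$-power order, so $\Delta$ is a $p$-group. I do not anticipate a serious obstacle here — the whole argument hinges on the single observation that the ramification index of $L/K$ is $1$, which is exactly what the hypothesis $\nu(K^{*})=\ZZ$ supplies, and this is what turns the $l$-linear character $\chi\colon\Delta\to l^{*}$ trivial and lets Lemma~\ref{lem.bur5.1} finish the job.
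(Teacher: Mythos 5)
Your proof is correct and takes essentially the same route as the paper's: both hinge on the observation that $\nu(K^*)=\ZZ$ supplies a uniformizer $\pi\in K$, which is fixed by $G$, so $g\in\Delta$ automatically acts trivially on $L_{\geqslant 1}/L_{\geqslant 2}$ and Lemma~\ref{lem.bur5.1} applies. The only cosmetic difference is that the paper argues by contradiction with a single element of prime order $q\neq\cha(k)$, whereas you apply the lemma to every $g\in\Delta$ and then invoke Cauchy's theorem to conclude that a finite group all of whose elements have $p$-power order is a $p$-group; both assemblies are standard and interchangeable.
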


\begin{proof} Assume the contrary. Then we can choose an element $g \in \Delta$ of prime order $q$, such that $q \neq \cha (k)$.
Let $M$ be the maximal ideal of the valuation ring $\cO_L$. Since we are assuming that $\nu(K^*) = \nu(L^*) = \ZZ$, we can choose 
a uniformizing parameter $t \in K$ for $\nu$. 
Since $g \in \Delta$, $g$ acts trivially on both $l = \cO_L/M$ and $M/M^2 = l \cdot t$. By Lemma~\ref{lem.bur5.1}, 
$g$ acts trivially on $L$. This contradicts our assumption that $G$ acts faithfully on $L$.
\end{proof}

We are now ready to prove the main result of this section.

\begin{theorem} \label{thm.main0}
Let $(R, \nu)$ be a discrete valuation ring with residue field $k$ and fraction field $K$, and
$G$ be a finite group. If $p = \cha(k) > 0$, assume that $G$ is weakly tame at $p$.
Then $\ed_{K}(G)  \geqslant \ed_{k}(G)$.
\end{theorem}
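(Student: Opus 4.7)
The plan is to compare the two essential dimensions via a single faithful representation defined over $R$. I would take $V$ to be the regular representation of $G$ over $R$, so that both $V_{K}:=V\otimes_{R}K$ and $V_{k}:=V\otimes_{R}k$ are faithful $G$-modules. Let $\pi$ be a uniformizer of $R$. Then $R[V]_{(\pi)}$ is a DVR with fraction field $L:=K(V)$ and residue field $l:=k(V_{k})$, equipping $L$ with a $G$-invariant discrete valuation $\tilde\nu$ satisfying $\tilde\nu(K^{*})=\ZZ$ and having residue field $l$.

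Next I would pick a $G$-compression $V_{K}\dashrightarrow X$ realizing $\ed_{K}(G)$; that is, $X$ is an irreducible $K$-variety with a generically free $G$-action and $\dim X=\ed_{K}(G)$. Set $L_{0}:=K(X)$, regarded as a $G$-stable subfield of $L$, $F_{0}:=L_{0}^{G}$, and $\nu_{0}:=\tilde\nu|_{L_{0}}$. Then $\trdeg_{K}F_{0}=\ed_{K}(G)$, the group $G$ acts faithfully on $L_{0}$ and trivially on $F_{0}$ while preserving $\nu_{0}$, and $\nu_{0}(F_{0}^{*})=\ZZ$ (because $K\subseteq F_{0}$). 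The main step is now to apply Proposition~\ref{prop.faithful} with $(L,K,\nu)$ replaced by $(L_{0},F_{0},\nu_{0})$: the kernel $\Delta$ of the induced $G$-action on the residue field $l_{0}$ of $\nu_{0}$ is trivial when $\cha k=0$ and a $p$-subgroup when $\cha k=p>0$. Since $\Delta$ is normal in $G$ and $G$ is weakly tame at $p$, we conclude $\Delta=\{1\}$, so $G$ acts faithfully on $l_{0}$. This is the crux of the argument: weak tameness is precisely the hypothesis needed to keep the residual action faithful.

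To finish, observe that the inclusion $l_{0}\hookrightarrow k(V_{k})$ is $G$-equivariant, so it corresponds to a $G$-equivariant dominant rational map $V_{k}\dashrightarrow X'$ with $k(X')=l_{0}$. Since $G$ is a finite group acting faithfully on the irreducible variety $X'$, it acts generically freely, and $V_{k}\dashrightarrow X'$ is a bona fide $G$-compression over $k$. Let $F_{0}^{\mathrm{res}}$ denote the residue field of $\nu_{0}|_{F_{0}}$. Lemma~\ref{lem.valuation} applied to $(F_{0},\nu_{0}|_{F_{0}})$ gives $\trdeg_{k}F_{0}^{\mathrm{res}}\leqslant\trdeg_{K}F_{0}=\ed_{K}(G)$, and since $l_{0}/F_{0}^{\mathrm{res}}$ is algebraic (because $L_{0}/F_{0}$ is finite), we obtain $\dim X'=\trdeg_{k}l_{0}=\trdeg_{k}F_{0}^{\mathrm{res}}\leqslant\ed_{K}(G)$. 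Consequently $\ed_{k}(G)\leqslant\dim X'\leqslant\ed_{K}(G)$, as required.
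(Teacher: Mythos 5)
Your proof is correct and follows essentially the same route as the paper's: pass to the divisorial valuation on $K(V_K)$ corresponding to the closed fiber of the regular representation over $R$, restrict it to the function field of a minimal compression, then combine Proposition~\ref{prop.faithful} (with weak tameness forcing the kernel $\Delta$ to be trivial) and Lemma~\ref{lem.valuation}. The only superficial differences are that you route the transcendence-degree bound through the invariant field $F_0 = L_0^G$ and a model $X'$ of the residue field, whereas the paper applies Lemma~\ref{lem.valuation} and the Buhler--Reichstein field-theoretic characterization of $\ed_k(G)$ directly to $L_0$ and its residue field $l_0$.
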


\begin{proof} Set $d \eqdef \ed_K(G)$. Let $R[G]$ be the group algebra of $G$ and let $V_R = (\mathbb{A}_R)^{|G|}$ denote
the corresponding $R$-scheme equipped with the (left) regular action of $G$.
By definition $d$ is the minimal transcendence degree
$\trdeg_K(L)$, where $L$ ranges over $G$-invariant intermediate subfields $K \subset L \subset K(V_K)$ such that the $G$-action on $L$ is faithful; see~\cite{bur}. Choose a $G$-invariant intermediate subfield $L$ such that $\trdeg_K(L) = d$. 

We will now construct a $G$-invariant intermediate 
subfield $k \subset l \subset k(V_k)$, where $V_k$ is the regular 
representation of $G$ over $k$, as follows.
Lift the given valuation $\nu \colon K^* \to \ZZ$ to the purely transcendental extension $K(V_K)$ of $K$ in the obvious way.
That is, $\nu: K(V_K)^* \to \mathbb{Z}$ is the divisorial valuation corresponding to the fiber of $V_R$ over the closed point in $\spec R$.
The residue field of $K(V_K)$ is then $k(V_k)$. 
By restriction, $\nu$ is a valuation on $L$ with $\nu(L^*)=\mathbb{Z}$.  
Let $l$ be the residue field of $L$. Clearly $k \subset l \subset k(V_k)$ and $\nu$ is
invariant under $G$. By Proposition~\ref{prop.faithful},
$G$ acts faithfully on $l$. Moreover, by Lemma~\ref{lem.valuation}, $\trdeg_k(l) \leqslant d$. Thus
$\ed_k(G) \leqslant d = \ed_K(G)$, as desired.
\end{proof}

\section{Examples illustrating Theorem~\ref{thm.main0} and a simple application}
\label{sect.cors}

\begin{example} \label{ex.necessary1} The following example shows 
that Theorem~\ref{thm.main0} fails if we do not assume that
$G$ is weakly tame. Choose $R$ so that
$\cha K = 0$, $\cha k = p > 0$, and $K$ contains a $p^{2}$-th root of $1$. 
Let $G = C_{p^2}$ be the cyclic group of order $p^{2}$. 
Since $K$ contains a primitive $p^2$-th root of $1$,
$\ed_K (G) = \ed_{K} (\rC_{p^{2}}) = 1$. On the other hand, 
$\ed_k(G) = \ed_{k} (\rC_{p^{2}}) = 2$; this is a special (known) case of Ledet's conjecture, 
see~Remark~\ref{rem.ledet}.
\end{example}

\begin{example} \label{ex.necessary2}
Here is an example showing that Theorem~\ref{thm.main0} fails if we do not 
assume that $R$ is a DVR. Let $R \subseteq \CC\ds{t}$ be the subring 
consisting of power series in $t$ 
whose constant term is real. Then $R$ is a one-dimensional complete 
Noetherian local ring with quotient field $K = \CC\dr{t}$ and 
residue field $k = \RR$, but not a DVR. Letting $G = C_4$ be the 
cyclic group of order $4$, we see that in this situation 
$\ed_K(G) = \ed_{\CC\dr{t}} (\rC_{4}) = 1$, 
while $\ed_k(G) = \ed_{\RR} (\rC_{4}) = 2$; see \cite[Theorem 7.6]{bf1}.
\end{example}

\begin{example} \label{ex.semicontinuity} (cf.~\cite[Remark 4.5(ii)]{tossici})
This example shows that essential dimension is not semicontinuous 
in any reasonable sense, even in characteristic $0$. Consider the scheme
   \[
   S \eqdef \spec\QQ[u, x]/(x^{2} - u)\,.
   \]
The embedding $\QQ[u] \subseteq \QQ[u, x]/(x^{2} - u)$ gives a finite map $S \arr \AA^{1}_{\QQ}$. If $p$ is an odd prime, the inverse image of the prime $(u - p) \subseteq \QQ[u]$ in $S$ consists of a point $s_{p}$ with residue field $k(s_{p}) =\mathbb{Q}(\sqrt{p}) = \QQ[x]/(x^{2} - p)$. Then $\ed_{\QQ(\sqrt{p})}(\rC_{4}) = 1$ if $-1$ is a square modulo $p$, and $\ed_{\QQ (\sqrt{p})}\rC_{4} = 2$ if $-1$ is not a square modulo $p$; once again, see~\cite[Theorem 7.6]{bf1}. Equivalently,
$\ed_{\QQ(\sqrt{p})}(\rC_{4}) = 1$ if $p \equiv 1 \pmod{4}$, and $\ed_{\mathbb{Q}(\sqrt{p})}\rC_{4} = 2$ is $p  \equiv 3 \pmod{4}$. 
We conclude that the set of points $s \in S$ with $\ed_{k(s)}\rC_{4} = 1$ is dense in $S$, and likewise for the set of points $s \in S$ with $\ed_{k(s)}\rC_{4} = 2$
is also dense in $S$. 
\end{example}

We conclude this section with an easy corollary 
of Theorem~\ref{thm.main0}. 

\begin{corollary} \label{cor1}
Let $p$ be a prime, $G$ a finite group weakly tame at $p$. Then 
\begin{enumerate1}

\item[(a)] {\rm(}cf.~\cite[Corollary 4.2]{tossici}{\rm)}  $\ed_{\QQ}G \geqslant\ed_{\FF_{p}}G$.

\item[(b)] If $K$ is a field of characteristic $0$ and $k$ a field of characteristic $p$ containing $\overline{\FF}_{p}$, then $\ed_{K}G \geqslant \ed_{k}G$.
\end{enumerate1}
\end{corollary}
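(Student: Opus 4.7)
For part~(a), I would apply Theorem~\ref{thm.main0} directly to the DVR $R = \ZZ_{(p)}$, the localization of $\ZZ$ at the prime ideal $(p)$. This is a DVR with fraction field $\QQ$ and residue field $\FF_p$, so the hypothesis that $G$ is weakly tame at $p$ makes Theorem~\ref{thm.main0} applicable and immediately yields $\ed_{\QQ} G \geqslant \ed_{\FF_p} G$.

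For part~(b), my plan proceeds in two stages. First, since essential dimension is non-increasing under base field extensions and $\overline{\FF}_p \subseteq k$, I have $\ed_k G \leqslant \ed_{\overline{\FF}_p} G$; hence it suffices to prove $\ed_K G \geqslant \ed_{\overline{\FF}_p} G$ for an arbitrary characteristic zero field $K$. Applying Theorem~\ref{thm.main0} to the Witt ring $W(\overline{\FF}_p)$, a complete DVR with residue field $\overline{\FF}_p$ and fraction field $L_0 = \operatorname{Frac}(W(\overline{\FF}_p))$ of characteristic zero, handles the special case $K = L_0$.

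The main obstacle is the transfer from $L_0$ to an arbitrary characteristic zero field $K$. My plan is to first pass to the algebraic closure: monotonicity gives $\ed_K G \geqslant \ed_{\overline K} G$, and the Lefschetz-type principle for essential dimension of finite groups (over algebraically closed fields of a fixed characteristic, $\ed$ is independent of the field) gives $\ed_{\overline K} G = \ed_{\overline{\QQ}} G$. Writing $\overline{\QQ}$ as a directed union of its finite subextensions $F/\QQ$ and choosing an embedding $F \hookrightarrow \overline{\QQ_p}$, each $F$ sits inside some finite extension $F_p/\QQ_p$---a complete discretely valued field whose integer ring has residue field $\FF_{p^m} \subseteq \overline{\FF}_p$. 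Theorem~\ref{thm.main0} applied to this integer ring, together with monotonicity, yields
\[
\ed_F G \;\geqslant\; \ed_{F_p} G \;\geqslant\; \ed_{\FF_{p^m}} G \;\geqslant\; \ed_{\overline{\FF}_p} G.
\]
A versality argument (the value $\ed_{\overline{\QQ}} G$ is realized by a specific torsor defined over a finite subextension $F \subseteq \overline{\QQ}$) then gives $\ed_{\overline{\QQ}} G \geqslant \ed_{\overline{\FF}_p} G$, completing the chain. The delicate step is justifying the Lefschetz/versality descent from $\overline{\QQ}$ to a finite $F$, which is the main technical obstacle; it ultimately rests on the fact that compressions of a $G$-torsor over $\overline{\QQ}(V)$ are finite geometric data already present over some sufficiently large number field.
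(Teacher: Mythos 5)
Your proposal is correct and follows essentially the same route as the paper's proof: reduce to $K$ algebraically closed via monotonicity, invoke the invariance of $\ed$ over algebraically closed fields of a fixed characteristic to pass to $\overline{\QQ}$, realize $\ed_{\overline{\QQ}}G$ over a number field $E$, apply Theorem~\ref{thm.main0} to a DVR with generic characteristic $0$ and residue field a finite field of characteristic $p$, and finish by monotonicity. The only (harmless) deviation is that the Witt-ring warm-up and the detour through the $p$-adic completions $F_p$ are unnecessary: the paper applies Theorem~\ref{thm.main0} directly to the localization of $\cO_E$ at a prime $\frp$ over $p$, which is already a DVR with fraction field $E$ and residue field $\cO_E/\frp \subseteq k$, so one can go straight from $\ed_E G$ to $\ed_{\cO_E/\frp} G$ to $\ed_k G$.
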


\begin{proof}
(a) follows directly from Theorem~\ref{thm.main0} by taking $R$ to be the localization of the ring of integers $\ZZ$ at a prime ideal $p \ZZ$.

(b) Let $\overline{K}$ be the algebraic closure of $K$. Since 
$\ed_K(G) \geq \ed_{\overline{K}}(G)$, we may replace that $K$ 
by $\overline{K}$ and thus assume that $K$ is algebraically closed. 
Note that $\ed_{K}G = \ed_{\overline{\QQ}}G$ and $\ed_{k}G = \ed_{\overline{\FF}_{p}}G$; 
see~\cite[Proposition 2.14]{brv1a} or~\cite[Example 4.10]{tossici}.

Choose a number field $E \subseteq \overline{\QQ}$
such that $\ed_{E}G = \ed_{\overline{\QQ}}G$
and let $\frp \subseteq \cO_{E}$
a prime in the ring $\cO_{E}$
of algebraic integers in $E$
lying over $p$.
Set $E_{0} \eqdef \cO_{E}/\frp$.
Since $k$
contains $\overline{\FF}_{p}$,
there is an embedding $E_{0} \subseteq k$.
By Theorem~\ref{thm.main0}, $\ed_{E}(G) \geqslant \ed_{E_{0}}(G)$
and since $E_0 \subset k$, $\ed_{E_{0}}G \geqslant \ed_{k}G$.
\end{proof}

\begin{example} \label{ex.equality}
A.~Duncan pointed out to us that equality in Corollary~\ref{cor1}(b) does not always hold.
For example, let $G = A_5$ be the alternating group of order $60$ and
$p = 2$. Note that since $A_5$ is simple, it is weakly tame at every prime.
By \cite[Theorem 6.7]{bur}, $\ed_{\CC}(A_5) = 2$.
On the other hand, 
$A_5 \simeq \SL_2(\mathbb{F}_4)$ admits a $2$-dimensional faithful linear representation 
over any field $k$ containing $\mathbb{F}_4$, that is, the representation coming from the obvious  inclusion of $\SL_2(\FF_4)$ into $\SL_2(k)$. The natural ($A_5$-equivariant) 
projection $\AA^2 \dasharrow \PP^1$ now tells us that $\ed_k(A_5) = 1$. In summary,
\[ 2 = \ed_{\CC}(A_5) > \ed_k(A_5) = 1 . \]
\end{example}

\begin{remark} \label{rem.inequality}
The group $G = A_5$ in Example~\ref{ex.equality} is weakly tame but not tame at $2$. 
We do not know of any such examples with $G$ tame.
We conjecture that they do not exist. That is, if $|G|$ is prime to $p$, then under the hypotheses 
of Corollary~\ref{cor1}(b), $\ed_{K}G =  \ed_{k}G$, provided that 
$K$ is algebraically closed.
\end{remark}

\section{Ledet's conjecture and its consequences}

The following conjecture is due to A.~Ledet \cite{ledet-p}.

\begin{conjecture}\label{conj.ledet}
If $k$ is a field of characteristic $p > 0$, $n$ is a natural number, and $\rC_{p^{n}}$ is a cyclic group of order $p^{n}$, then $\ed_{k}(\rC_{p^{n}}) = n$.
\end{conjecture}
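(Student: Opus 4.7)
The plan is to prove the upper and lower bounds separately. The upper bound $\ed_k(\rC_{p^n}) \leq n$ is Ledet's original observation: the Artin--Schreier--Witt exact sequence
\[
0 \arr \rC_{p^n} \arr W_n \xarr{F - \id} W_n \arr 0
\]
of commutative group schemes over $\FF_p$, where $W_n$ is the ring scheme of Witt vectors of length $n$, exhibits $W_n$ as a rational $k$-variety of dimension $n$ on which $\rC_{p^n}$ acts freely by translation. The generic fibre of $\wp = F - \id$ is a versal $\rC_{p^n}$-torsor, giving $\ed_k(\rC_{p^n}) \leq \dim W_n = n$.

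For the lower bound $\ed_k(\rC_{p^n}) \geq n$, the natural approach uses the identification $H^1(K, \rC_{p^n}) \cong W_n(K)/\wp W_n(K)$ for any field $K$ of characteristic $p$. A versal $\rC_{p^n}$-torsor over $K$ is then represented by a Witt vector $(a_0, \dots, a_{n-1}) \in W_n(K)$ whose class is ``sufficiently generic'', and the aim is to show that any such class forces $\trdeg_k(K) \geq n$. The case $n = 1$ is elementary; the case $n = 2$ was established by Ledet via a direct Artin--Schreier computation showing that a versal $\rC_{p^2}$-torsor cannot be supported on a curve over $k$.

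The main obstacle is that a simple induction on $n$ fails. Given a generically free $\rC_{p^n}$-action on a smooth variety $X$ and the unique subgroup $H \subset \rC_{p^n}$ of order $p$, the induced $\rC_{p^{n-1}}$-action on $X/H$ has the same dimension as $X$, and a priori the resulting $\rC_{p^{n-1}}$-torsor need not be versal, so the inductive hypothesis does not apply directly. The most promising line appears to be the construction of a cohomological invariant of $\rC_{p^n}$-torsors with values in a graded $\FF_p$-module of rank $n$---for instance via a Kato-style filtration on $H^1(K, \rC_{p^n})$ whose successive quotients land in differentials $\Omega^{\bullet}_{K/\FF_p}$ modulo exact forms---and then to show that on the versal torsor this invariant detects $n$ algebraically independent transcendentals. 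Producing such an invariant and verifying that it captures essential dimension is where the conjecture remains open.
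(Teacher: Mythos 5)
You have correctly recognized that the statement you were asked about is \emph{not} a theorem in the paper at all: it is stated as an open conjecture (Conjecture~\ref{conj.ledet}), attributed to Ledet, and the paper supplies no proof. The paper's own commentary (Remark~\ref{rem.ledet}) records precisely what you say: the upper bound $\ed_k(\rC_{p^n}) \leq n$ is known from Ledet's work, the lower bound $\ed_k(\rC_{p^n}) \geq 2$ for $n \geq 2$ is known from Ledet's earlier paper, so the conjecture holds for $n = 1$ and $n = 2$, and it remains open for all $n \geq 3$. Your account of the upper bound via the Artin--Schreier--Witt sequence and the $\rC_{p^n}$-action on the Witt vector scheme $W_n$ is accurate, as is your diagnosis that naive induction on $n$ via the quotient $X/H$ by the order-$p$ subgroup fails because versality is not preserved. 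The remainder of your proposal (a Kato-style filtration on $H^1(K, \rC_{p^n})$ producing a rank-$n$ invariant) is a speculative research direction, and you are candid that this is where the problem is genuinely open. In short: there is no ``paper proof'' to compare against, and you have not claimed to close the gap; your write-up is an honest and accurate summary of the state of the art rather than a proof, which is the correct response to being asked to prove an open conjecture.
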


\begin{remark} \label{rem.ledet}
It is known that in characteristic $p$, $\ed(\rC_{p^n}) \leqslant n$ for every $n \geq 1$ (see~\cite{ledet-p}) and $\ed(\rC_{p^n}) \geqslant 2$ 
if $n \geqslant 2$ (\cite[Theorems 5 and 7]{ledet-ed1}). Thus the conjecture 
holds for $n = 1$ and $n = 2$; it remains open 
for every $n \geqslant 3$.
\end{remark}

Combining Conjecture~\ref{conj.ledet} with Theorem~\ref{thm.main0}, we obtain the following surprising result. 

\begin{proposition} \label{prop2}
Assume that a finite group $G$ is weakly tame at a prime $p$ and contains an element of order $p^n$. Let $K$ be a field of characteristic $0$. 
If Conjecture~\ref{conj.ledet} holds for $C_{p^n}$, then $\ed_{K}(G) \geqslant n$.
\end{proposition}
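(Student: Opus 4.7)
The plan is to chain three inequalities together. First, since $G$ is weakly tame at $p$, I would apply Corollary~\ref{cor1}(b) with $k = \overline{\FF}_{p}$ to get
\[
\ed_{K}(G) \geqslant \ed_{\overline{\FF}_{p}}(G).
\]
Second, because $G$ contains an element of order $p^{n}$, it contains a cyclic subgroup $\rC_{p^{n}}$, and subgroup-monotonicity of essential dimension for finite groups gives
\[
\ed_{\overline{\FF}_{p}}(G) \geqslant \ed_{\overline{\FF}_{p}}(\rC_{p^{n}}).
\]
Third, the hypothesis that Conjecture~\ref{conj.ledet} holds for $\rC_{p^{n}}$ provides $\ed_{\overline{\FF}_{p}}(\rC_{p^{n}}) = n$. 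Chaining these produces $\ed_{K}(G) \geqslant n$, as desired.

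The step that deserves a careful justification---and in that sense is the main thing to check---is the subgroup monotonicity $\ed_{k}(G) \geqslant \ed_{k}(H)$ for a subgroup $H \leqslant G$ over an arbitrary field $k$. I would derive it from the same Buhler--Reichstein characterization invoked in the proof of Theorem~\ref{thm.main0}: letting $V$ be the regular representation of $G$ over $k$, $\ed_{k}(G)$ equals the minimum of $\trdeg_{k}(L)$ over $G$-invariant intermediate subfields $k \subset L \subset k(V)$ on which $G$ acts faithfully. Any such $L$ is automatically $H$-invariant, and the $H$-action on $L$ remains faithful because every nontrivial element of $H$ is nontrivial in $G$ and hence acts nontrivially on $L$. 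Since $V$ restricted to $H$ is again faithful, applying the same characterization to $H$ yields $\trdeg_{k}(L) \geqslant \ed_{k}(H)$, and taking the minimum over $L$ gives $\ed_{k}(G) \geqslant \ed_{k}(H)$.

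Beyond assembling these ingredients, there is no real mathematical obstacle. The substantive content---the mixed-characteristic descent inequality (Corollary~\ref{cor1}(b), which rests on Theorem~\ref{thm.main0}) and the Buhler--Reichstein characterization---has already been established, and the role of Proposition~\ref{prop2} is simply to combine them with Ledet's hypothesis for $\rC_{p^{n}}$.
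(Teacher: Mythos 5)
Your proof is correct and follows exactly the paper's own argument: Corollary~\ref{cor1}(b) with $k = \overline{\FF}_p$, then subgroup monotonicity $\ed_k(G) \geqslant \ed_k(\rC_{p^n})$, then Ledet's conjecture. The only difference is that you spell out the (standard) subgroup-monotonicity step via the Buhler--Reichstein characterization, which the paper takes as known.
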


\begin{proof} By Corollary~\ref{cor1}(b), with $k = \overline{\FF}_p$, we have $\ed_K(G) \geqslant \ed_k(G)$. Since $G$ contains $\rC_{p^n}$, $\ed_k(G) \geqslant \ed_k(\rC_{p^n})$, and by Conjecture~\ref{conj.ledet}, $\ed_k(\rC_{p^n}) = n$.
\end{proof}

%

\begin{corollary} \label{cor.semidirect}
Let $p$ be a prime and $n$ a positive integer. Choose a positive integer 
$m$ such that $q \eqdef mp^{n}+1$ is a prime. 
(By Dirichlet's theorem on primes in
arithmetic progressions, there are infinitely many such $m$.) 
Let $\rC_{q}$ be a cyclic group of order $q$. Then $\aut\rC_{q} = (\ZZ/q\ZZ)^{*}$ is cyclic of order $mp^{n}$; let $\rC_{p^{n}} \subseteq (\ZZ/q\ZZ)^{*}$ denote the subgroup of order $p^{n}$. Set 
$G \eqdef \rC_{p^{n}} \ltimes \rC_{q}$. Then

\smallskip
(a) $G$ is weakly tame at $p$, and

\smallskip
(b) if Conjecture~\ref{conj.ledet} holds, then $\ed_K(G) \geqslant n$ for any field $K$ of characteristic $0$. 
\end{corollary}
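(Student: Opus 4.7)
The plan is to verify both parts by reducing (a) to a direct structural analysis of the semidirect product, and (b) to a straightforward application of Proposition~\ref{prop2}.

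For part (a), I would suppose that $N \triangleleft G$ is a non-trivial normal $p$-subgroup and derive a contradiction. First, since $|C_{q}|=q$ is coprime to $p$, one has $N \cap C_{q}=\{1\}$, so the projection $G \arr G/C_{q} = C_{p^{n}}$ restricts to an injection on $N$. Next, I would consider the conjugation action of $C_{q}$ on $N$: this gives a homomorphism $C_{q} \arr \aut N$, and since $N$ is a $p$-group, $\aut N$ has order a power of $p$, so $C_{q}$ centralizes $N$ in $G$. By construction $C_{p^{n}} \subseteq \aut C_{q} = (\ZZ/q\ZZ)^{*}$ is included as a subgroup, so $C_{p^{n}}$ acts faithfully on $C_{q}$; but then the nontrivial image of $N$ in $C_{p^{n}}$ would act trivially on $C_{q}$, a contradiction.

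For part (b), the subgroup $C_{p^{n}} \subseteq G$ contains an element of order $p^{n}$. Granted Conjecture~\ref{conj.ledet} for $C_{p^{n}}$ and the fact from part (a) that $G$ is weakly tame at $p$, Proposition~\ref{prop2} applies directly to yield $\ed_{K}(G) \geqslant n$ for every field $K$ of characteristic $0$. The appeal to Dirichlet's theorem only serves to produce infinitely many choices of $m$ (and hence of $q$), so no proof is needed there beyond citing it.

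The only nontrivial step is (a), and even there the main point is the standard fact that a $p$-group has automorphism group of $p$-power order, combined with the faithful action of $C_{p^{n}}$ on $C_{q}$ built into the definition of $G$. No obstacle of a serious nature is anticipated: the corollary is essentially a packaging of Proposition~\ref{prop2} together with a concrete family of examples.
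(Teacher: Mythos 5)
Your approach to part (a) has a genuine gap: the claim that ``since $N$ is a $p$-group, $\aut N$ has order a power of $p$'' is false. The simplest counterexample is $N = \rC_p$ for $p$ odd, where $|\aut N| = p-1$; more dramatically, for $N = (\rC_p)^k$ we have $\aut N \cong \GL_k(\FF_p)$, whose order is far from a $p$-power. So your stated reason for concluding that $\rC_q$ centralizes $N$ does not hold up.

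The conclusion itself is correct, and can be repaired in either of two ways. The paper's route: since $N$ is a normal $p$-subgroup, it lies in every Sylow $p$-subgroup, in particular $N \subseteq \rC_{p^n}$, so $|N| \leqslant p^n < q$; then conjugation by $\rC_q$ partitions $N$ into orbits of size $1$ or $q$, and since $|N| < q$ all orbits are singletons, i.e.\ $\rC_q$ centralizes $N$. Alternatively, staying closer to your setup: you already observed that the projection $G \arr \rC_{p^n}$ is injective on $N$, so $N$ is cyclic of order $p^j$ with $j \leqslant n$; then $|\aut N| = \phi(p^j) = p^{j-1}(p-1) < p^j \leqslant p^n < q$, so $q \nmid |\aut N|$ and the homomorphism $\rC_q \arr \aut N$ is trivial. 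Either fix makes your argument complete. Once $\rC_q$ centralizes $N$, your final step (the image of $N$ in $\rC_{p^n}$ acts trivially on $\rC_q$, contradicting faithfulness of the $\rC_{p^n}$-action) is correct and matches the paper's. Part (b) is handled exactly as in the paper.
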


\begin{proof}
(a) Suppose $S \subseteq G$ is a normal $p$-subgroup. Then $S$ lies in every Sylow $p$-subgroup of $G$, in particular, in $C_{p^n}$. Our goal is to show that $S = \{ 1 \}$.
The cyclic group $C_q$ of prime order $q$
acts on $S$ by conjugation. Since $q > p^n \geqslant |S|$, this action is trivial. In other words,
$S$ is a central subgroup of $G$. In particular, $S$ acts trivially on $C_q$ by conjugation.  On the other hand, by the definition of $G$,
$C_{p^n}$ acts faithfully on $C_{q}$ by conjugation. We conclude that $S = \{1\}$, as desired.

(b) follows from Proposition~\ref{prop2}.
\end{proof}

\begin{remark} \label{rem.semidirect1} The inequality of Corollary~\ref{cor.semidirect}(b) is equivalent to 
\begin{equation} \label{e.semidirect}
\ed_\CC (\rC_{p^n} \ltimes \rC_{q}) \geqslant n \, ,
\end{equation}
where $\CC$ is the field of complex numbers (once again, see~\cite[Proposition 2.14]{brv1a} or~\cite[Example 4.10]{tossici}). For $n = 2$ and $3$,
this inequality can be proved unconditionally 
(i.e., without assuming Conjecture~\ref{conj.ledet})
by appealing to the classifications of finite groups of essential 
dimension $1$ and $2$ over $\CC$ in~\cite[Theorem 6.2]{bur} and \cite[Theorem 1.1]{duncan-ed2} respectively.
\end{remark}

\begin{remark} \label{rem.semidirect2} Let $G$ be a finite group. Set
\[ \edloc_k(G) := \max  \, \{ \ed(G; p) \, | \, \text{$p$ is a prime} \}, \]
where $\ed_k(G; p)$ denotes essential dimension of $G$ at a prime $p$ and the superscript
``loc" stands for ``local". If the base field $k$ is assumed to be fixed, we will write $\ed(G; p)$ and $\edloc(G)$
in place of $\ed_k(G; p)$ and $\edloc_k$, respectively.

Clearly $\ed(G) \geqslant \edloc(G)$. 
In the language of \cite[Section 5]{reichstein-icm}, computing 
$\edloc(G)$ is a Type I problem. This problem is solved, at least in principle, by the Karpenko-Merkurjev theorem~\cite{km2}.
Computing $\ed(G)$ in those case, where $\ed(G) > \edloc(G)$ is a Type II problem. Such problems tend to be very hard. For more on this, see \cite[Section 5]{reichstein-icm} or the discussion after the statement of Theorem 2 in~\cite{reichstein-cremona}.

Let us now return to the setting of Corollary~\ref{cor.semidirect}, where $G = \rC_{p^n} \ltimes \rC_{q}$.
Since all Sylow subgroups of $G = \rC_{p^n} \ltimes \rC_{q}$ are cyclic, one readily sees that
$\edloc_{\CC}(G) = 1$.
Thus the inequality~\eqref{e.semidirect} is a ``Type 2 problem"
whenever $n \geqslant 2$.
An unconditional proof of this inequality is out of the reach of all
currently available techniques for any $n\geqslant 3$. However, it is shown
in~\cite{reichstein-cremona} that
\[ \lim_{n \to \infty} \ed_{\CC} (\rC_{p^{n}} \ltimes \rC_{q}) \longrightarrow \infty \]
for any choice of $q$.
\end{remark}

\begin{remark} \label{rem.ed-p} It is shown in~\cite{rei-vi} that if $G$ is a finite group and $k$ is a field of characteristic $p$,
then
\begin{equation} \label{e.ed-p}
\ed_k(G; p) = \begin{cases} \text{$1$, if $p$ divides $|G|$, and} \\
\text{$0$, otherwise.}
\end{cases}
\end{equation}
In particular, $\edloc_k(C_{p^n}) = 1$ for every $n \geqslant 1$. So, for $n \geqslant 2$,
Conjecture~\ref{conj.ledet} is also a Type 2 problem. Thus the situation in Corollary~\ref{cor.semidirect} can be described as follows:
we deduce one Type II assertion from another, without being able to prove either one from first principles. Another results of this type is \cite[Proposition 10.8]{duncan-reichstein}; further examples can be found in the next section.
\end{remark}

\begin{remark} \label{rem.main0-at-p}
In view of~\eqref{e.ed-p}, Corollary~\ref{cor1}(b)
continues to hold if we replace essential dimension by essential dimension at $p$, for trivial reasons. Moreover,
under the assumptions of Corollary~\ref{cor1},
(a$^{\prime}$) $\ed_{\QQ}(G; p) \geqslant\ed_{\FF_{p}}(G; p)$ 
and (b$^{\prime}$) $\ed_{K}(G; p) \geqslant \ed_{k}(G; p)$, for any
finite group $G$, not necessarily weakly tame. In (b$^{\prime}$) we can also drop the requirement that $k$ should contain $\overline{\FF}_p$.
Note however that our proof of Theorem~\ref{thm.main0} breaks down 
if we replace essential dimension by essential dimension at $p$. 
\end{remark}

\section{Essential dimension of $\PSL_2(q)$}
\label{sect.psl2}

Let $p$ be a prime, $q = p^r$ be a prime power and $\mathbb F_q$ be a
field of $q$ elements.  Let $G = \PSL_2(q) = \PSL(2, \FF_q)$.  (To
avoid confusion, we remind the reader that $G$ is the quotient of
$\SL(2,\FF_q)$ by its subgroup $\{\pm 1\}$.  In general, it is not the
same thing as the group $\PSL_2(\FF_q)$ of $\FF_q$ points of the
algebraic group $\PSL_2 = \PGL_{2}$.)   For $q>3$, it is well-known
that $G$ is simple; see, e.g.,~\cite[p.~39]{Dieudonne}
or~\cite[p.~419]{gfg}.  Hence, $G$ is weakly tame at every prime. In
this section we will work over the field $k = \CC$ of complex numbers
and deduce lower bounds on $\ed_{\CC}(G)$ from Ledet's conjecture.

For some $q$, these lower bounds are Type II bounds, in the sense of
Remark~\ref{rem.semidirect2}, and are genuinely new. To establish this
we will compute $\edloc(G)$ in every case. We begin with the following
well-known description of the Sylow subgroups of $\PSL_2(q)$.

\begin{lemma}\label{dih}
Let $p$ and $\ell$ be prime numbers and set $q=p^r$ for some positive
integer $r$.  Let $G_{\ell}$ denote an $\ell$-Sylow subgroup
of $G=\PSL_2(q)$.  Then 
\begin{enumerate}
    \item[(a)] For $\ell=p$, we have $G_{\ell}\cong (C_p)^r$.
    \item[(b)]  For $\ell\not\in\{2,p\}$, $G_{\ell}$ is cyclic.
    \item[(c)] For $p$ odd and $\ell=2$, $G_{\ell}$ is 
    dihedral. 
\end{enumerate}
\end{lemma}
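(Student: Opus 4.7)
The plan is to leverage the well-known description of $\PSL_2(q)$ via the group $\SL_2(\FF_q)$, using its standard subgroup structure: the unipotent subgroup and the (images of the) split and non-split maximal tori together with their normalizers. Recall the order $|\PSL_2(q)| = q(q-1)(q+1)/\gcd(2,q-1)$, and let $\pi\colon \SL_2(\FF_q)\to \PSL_2(q)$ denote the natural quotient.

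First I would dispose of (a). Let $U\subset \SL_2(\FF_q)$ be the subgroup of upper triangular unipotent matrices; the map $a\mapsto \begin{pmatrix} 1 & a \\ 0 & 1\end{pmatrix}$ identifies $U$ with $(\FF_q,+)\cong (C_p)^r$. Since $\pm I\notin U$ unless $p=2$ (in which case $\pi$ is an isomorphism), $\pi(U)\cong (C_p)^r$, and it has the correct order to be a Sylow $p$-subgroup. For (b), let $T_s\subset \SL_2(\FF_q)$ be the diagonal torus (of order $q-1$) and $T_{ns}$ a non-split torus (of order $q+1$), both cyclic. Their images $\pi(T_s)$ and $\pi(T_{ns})$ are cyclic of orders $(q-1)/\gcd(2,q-1)$ and $(q+1)/\gcd(2,q-1)$. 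For $\ell\notin\{2,p\}$, since $\gcd(q-1,q+1)\mid 2$ and $\ell$ is odd, $\ell$ divides exactly one of $q-1$ and $q+1$; this matches the full $\ell$-part of $|G|$, and a Sylow $\ell$-subgroup of the corresponding $\pi(T)$ then serves as a Sylow $\ell$-subgroup of $G$, which is cyclic.

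For (c), assume $p$ is odd. The normalizer $N(T)$ in $\SL_2(\FF_q)$ of either torus $T$ is generated by $T$ and a Weyl-group representative $w$ satisfying $w^2=-I$ and $w t w^{-1} = t^{-1}$ for $t\in T$. Thus in $\PSL_2(q)$, the image $\pi(w)$ is an involution acting by inversion on the cyclic group $\pi(T)$, so $\pi(N(T_s))$ and $\pi(N(T_{ns}))$ are dihedral of orders $q-1$ and $q+1$ respectively. Writing $q\equiv \varepsilon\pmod 4$ with $\varepsilon\in\{\pm 1\}$, one of $q-\varepsilon$ and $q+\varepsilon$ is divisible by $4$ while the other has $2$-adic valuation exactly $1$; a direct count shows $v_2\bigl((q^2-1)/2\bigr)=v_2(q-\varepsilon)$, so the dihedral group $\pi(N)$ of order $q-\varepsilon$ contains a full Sylow $2$-subgroup of $G$. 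It remains to observe that a Sylow $2$-subgroup of a dihedral group $D_{2n}$ (with $n=2^a m$, $m$ odd) is generated by the unique element of order $2^a$ in its cyclic part together with any reflection, hence dihedral of order $2^{a+1}$.

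The only delicate step is the bookkeeping in (c): one must verify that the appropriate dihedral subgroup already has the full $2$-part of $|G|$ (so that intersecting with a Sylow is not needed) and confirm that the resulting Sylow is truly dihedral rather than cyclic. This is transparent once the valuations $v_2(q-1)$ and $v_2(q+1)$ are separated by parity of $(q-1)/2$. The small case $q=3$, where $|\PSL_2(3)|=12$ and the Sylow $2$-subgroup is the Klein four group (which we regard as the dihedral group of order $4$), is consistent with this analysis and needs only to be flagged for convention.
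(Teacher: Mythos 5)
The paper gives no proof here: it simply cites Gorenstein~\cite[Lemma~1.1, p.~418]{gfg}. Your argument is a correct, self-contained proof, and it is presumably close in spirit to the one in the cited reference. It is the standard approach: compute the order of $\PSL_2(q)$; observe that the unipotent radical $U\cong (\FF_q,+)\cong (C_p)^r$ already has full $p$-power order and injects into the quotient (for $p$ odd, because $-I\notin U$; for $p=2$, because $\pi$ is an isomorphism); for odd $\ell\neq p$ use that $\gcd(q-1,q+1)\mid 2$, so the full $\ell$-part of $|G|$ sits in one of the cyclic images $\pi(T_s)$ or $\pi(T_{ns})$; and for $\ell=2$ with $p$ odd use that $\pi(N(T))$ is dihedral of order $q\mp 1$ because the Weyl element $w$ with $w^2=-I$ inverts $T$ and $-I\in T$. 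Your $2$-adic valuation bookkeeping ($v_2\bigl((q^2-1)/2\bigr)=v_2(q-\varepsilon)$ when $q\equiv\varepsilon\pmod 4$) is correct and does show that the relevant dihedral normalizer image already contains a full Sylow $2$-subgroup; since $q-\varepsilon$ is divisible by $4$, that Sylow subgroup has order at least $4$, so it is genuinely dihedral (with the Klein four group counted as $D_4$), and your remark about $q=3$ covers the edge case cleanly. So what you add over the paper is a complete argument in place of a citation; what the paper buys by citing is brevity, since this structure theory of $\PSL_2(q)$ is classical.

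Two small points worth tightening if you write this up formally. First, in part~(b) you should state explicitly that a subgroup of a cyclic group is cyclic (trivial, but it is the step that finishes the argument). Second, in part~(c) you invoke the structure of a Sylow $2$-subgroup of a dihedral group; it is cleaner to say directly that if $D$ is dihedral of order $2n$ with cyclic part $C_n$, then the subgroup generated by the Sylow $2$-subgroup of $C_n$ and any reflection is a Sylow $2$-subgroup of $D$ and is itself dihedral, which is exactly what you sketch but phrased as a one-line lemma.
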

\begin{proof} See~\cite[Lemma 1.1 on page 418]{gfg}.
\end{proof}   

\begin{proposition} \label{prop.psl2} 
Let $p$ be a prime and $q = p^r$ be a prime power.

\smallskip
(a) $\edloc \PSL_2(q)= \begin{cases} r,  & \text{$q$ even};\\
 \max(2,r), & \text{$q$ odd}. 
\end{cases} $

\smallskip
(b)  Let $\ell$ be a prime and $s$ be a nonnegative
integer such that 
$2 \ell^s$ divides $q^2 - 1$. 
If Ledet's Conjecture~{\rm \ref{conj.ledet}} holds for cyclic groups of
order $\ell^s$ in characteristic $\ell$, then 
$\ed_{\CC} ( \PSL_2(q) ) \geqslant s$. 
\end{proposition}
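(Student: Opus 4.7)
The plan is to prove parts (a) and (b) separately, both leveraging the identification of Sylow subgroups in Lemma~\ref{dih}.

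For part (a), the idea is to apply Karpenko--Merkurjev at each prime $\ell$. Because the Sylow $\ell$-subgroup $P_\ell$ has index prime to $\ell$ in $G = \PSL_2(q)$, one has $\ed_\CC(G;\ell) = \ed_\CC(P_\ell;\ell)$, which in turn equals $\ed_\CC(P_\ell)$ by Karpenko--Merkurjev for $\ell$-groups. This last quantity is the minimum dimension of a faithful complex representation of $P_\ell$. Combining with Lemma~\ref{dih}: $(C_p)^r$ has minimum faithful representation of dimension $r$ by character theory; the dihedral $2$-Sylow of order $\geqslant 4$ (for $p$ odd) admits a faithful irreducible $2$-dimensional representation; and a cyclic Sylow admits a faithful character. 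Taking the maximum over $\ell$ and recalling that $q$ even forces $p=2$ (collapsing cases) yields the formulas of (a).

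For part (b), I would reduce to characteristic $\ell$ using Corollary~\ref{cor1}(b): since $\PSL_2(q)$ is simple for $q\geqslant 4$, it is weakly tame at every prime, hence $\ed_\CC(\PSL_2(q)) \geqslant \ed_{\overline{\FF}_\ell}(\PSL_2(q))$. It then suffices, via Proposition~\ref{prop2} applied with $G = \PSL_2(q)$, $p = \ell$, $n = s$, to exhibit a cyclic subgroup $C_{\ell^s} \subseteq \PSL_2(q)$; under Ledet's conjecture for $C_{\ell^s}$ in characteristic $\ell$ this yields the desired lower bound $\ed_\CC(\PSL_2(q)) \geqslant s$.

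The required number-theoretic input is that $\PSL_2(q)$ contains cyclic subgroups of orders $(q\pm 1)/\gcd(2,q-1)$, arising as images of the split and non-split maximal tori of $\SL_2(\FF_q)$. Since $\gcd(q-1, q+1) \leqslant 2$, the hypothesis $2\ell^s \mid q^2-1 = (q-1)(q+1)$ is designed so that $\ell^s$ divides one of these orders after accounting for the factor $\gcd(2,q-1)$ introduced by the quotient $\PSL_2 = \SL_2/\{\pm I\}$. The most delicate step I expect is the $\ell = 2$ case for $q$ odd, where the extra factor of $2$ in the hypothesis must be seen to compensate exactly for this central denominator via careful tracking of the $2$-adic valuations of $q-1$ and $q+1$ (exactly one of which equals $1$, the other carrying the ``bulk'' $2$-adic valuation). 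The small cases $q\leqslant 3$, where $\PSL_2(q)$ fails to be simple, are handled either by direct inspection or by observing that the hypothesis already forces $s$ small enough that the conclusion follows from (a).
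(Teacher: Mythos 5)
Your outline matches the paper's approach for both parts. For (a), the paper gets the lower bound $\ed_{\CC}(G_2;2)\geq 2$ by quoting the classification of groups of essential dimension $1$ in \cite[Theorem 6.2]{bur} rather than invoking Karpenko--Merkurjev directly, but the two are equivalent here; one small slip in your writeup is that the degenerate ``dihedral'' $2$\dash Sylow of order $4$ is the Klein four group, which has no faithful \emph{irreducible} $2$-dimensional representation (though it does have a faithful $2$-dimensional representation, which is all that is needed). For (b), you correctly reduce to Proposition~\ref{prop2} via Corollary~\ref{cor1}(b), as the paper does, using the cyclic subgroups of order $(q\pm 1)/2$ coming from the tori; this disposes of $\ell$ odd.

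The genuine gap is exactly in the step you flag as ``the most delicate'' and leave unverified: the $\ell=2$ case. You assert that the extra factor of $2$ in the hypothesis ``must be seen to compensate exactly'' for the central quotient. In fact it does not. For $q$ odd, the $2$-Sylow of $\PSL_2(q)$ is dihedral of order $2^{v_2(q^2-1)-1}$, so the largest $2$-power element order in $\PSL_2(q)$ is $2^{v_2(q^2-1)-2}$. The hypothesis $2\cdot 2^s \mid q^2-1$ gives only $v_2(q^2-1)\geq s+1$, hence a $2$-element of order at least $2^{s-1}$, not $2^s$. Concretely, with $q=5$ and $s=2$ one has $2\cdot 2^2 = 8 \mid 24 = 5^2-1$, yet $\PSL_2(5)\cong A_5$ has no element of order~$4$; likewise $q=7$, $s=3$ gives $16\mid 48$ but $\PSL_2(7)$ has no element of order $8$. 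To run the argument via Proposition~\ref{prop2} for $\ell=2$ one actually needs $4\cdot 2^s \mid q^2-1$. (The paper's own proof of (b) is just as terse at this point, asserting ``Thus, by~\eqref{Gsize}, $\PSL_2(q)$ contains an element of order $\ell^s$'' for both $\ell$ odd and $\ell=2$, and has the same issue for $\ell=2$.) So the argument as written proves the claim for $\ell$ odd, but for $\ell=2$ either the hypothesis must be strengthened by a further factor of $2$ or the conclusion must be weakened to $\ed_{\CC}(\PSL_2(q))\geq s-1$.
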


Note that part (b) is conditional on Ledet's conjecture but part (a) is not.  

\begin{proof} 
  Set $G=\PSL_2(q)$.  We begin by pointing out that
 \begin{equation}\label{Gsize}
        |G|=\begin{cases}
        (q-1)q(q+1)/2, & 2\nmid q;\\
        (q-1)q(q+1),   & 2| q.
   \end{cases}
    \end{equation}

(a) Recall that $\ed_{\CC}(G;\ell) = \ed_{\CC}(G_{\ell}; \ell)$, where $G_{\ell}$ is a Sylow $\ell$-subgroup of $G$.
So we only need to consider the primes $\ell$ dividing $|G|$; otherwise $G_{\ell} = \{ 1 \}$ and $\ed_{\CC}(G_{\ell}; \ell) = 0$.

If $\ell \neq 2$ or $p$, then by Lemma~\ref{dih} (b), $G_{\ell}$ is cyclic; hence, $\ed_{\CC}(G_{\ell}) = 1$. 

If $\ell = p$, then by  Lemma~\ref{dih} (a), $G_{\ell} = G_p = (C_p)^r$, and $\ed_{\CC}(G_p; p)=r$.

If $\ell = 2$ and $p$ is odd, then by Lemma~\ref{dih} (c), $G_{\ell}$ is  a dihedral group; 
hence, $G_{\ell}$ has a 2-dimensional faithful linear representation
over $\mathbb C$. We conclude that $\ed_{\CC}(G_{2}; 2) \leqslant 2$.  On the other hand, since $G_2$ is not cyclic 
and $|G_2|\equiv 0\pmod{4}$, 
$\ed_{\CC} G_{\ell}\geq 2$ by~\cite[Theorem 6.2]{bur}.  So $\ed_{\mathbb{C}} G_{\ell}=2$.

This proves part (a) for the case that $p$ is odd. The case that $p$ is even follows directly 
from Lemma~\ref{dih} by the same method. 

(b) Note that the assertion of part (b) is vacuous if $\ell=p$ or $p = 2$.
So we may assume that $p$ is odd and $\ell\neq p$. Then it follows from Lemma~\ref{dih} that the Sylow $\ell$-subgroup of
$\PSL_2(q)$ is cyclic if $\ell$ is odd and dihedral if $\ell = 2$. Thus, by~\eqref{Gsize},  
 $\PSL_2(q)$ contains an element of order ${\ell}^s$, and the desired inequality follows from Proposition~\ref{prop2}.
\end{proof}

\begin{remark} \label{rem1.psl2} Note that, 
for odd $\ell$, Proposition~\ref{prop.psl2}(a) gives the ``Type I" lower bound:  
$\ed_{\CC}\PSL_2(q) \geqslant \max \{ 2, r \}$; cf. Remark~\ref{rem.semidirect2}.
We also know which finite simple groups have essential dimension $1$, $2$ or $3$ from \cite[Theorem 6.2]{bur}, \cite{duncan-ed2} and \cite{beauville},
respectively. Thus the lower bound of Proposition~\ref{prop.psl2}(b) is only of interest in those cases, where
\[ s \geqslant \max \{r + 1, 5 \}. \]
In such cases an unconditional proof of the lower bound \[ \ed_{\CC}(\PGL_2(q)) \geqslant s \]
(i.e., a proof that does not rely on Ledet's conjecture) is not known.
\end{remark}

\begin{remark} \label{rem2.psl2}
It follows from Proposition~\ref{prop.psl2}(a) that $\ed_{\CC}(\PSL_2(q)) \geqslant \edloc_{\CC}(\PSL_2(q) \geqslant r$
for any $q = p^r$. Hence, if we want 
to deduce an interesting (Type II) lower bound on $\ed_{\CC}(\PSL_2(q))$ from Proposition~\ref{prop2}, we need
$\ell^s$ to divide $q \pm 1 = p^r \pm 1$ for some prime $\ell$ and some integer $s \geqslant r + 1$.
This can only happen if $\ell < p$. In particular, this method gives no new information 
about $\ed_{\CC}(\PSL_2(q))$ in the case, where $q$ is a power of $2$.
\end{remark}

%
%
%
\begin{example} \label{ex2.psl}
Let $p = 31$ and $q = p^2 = 961$. Then $(q-1)/2 = 960$ is divisible by $2^6$. Thus Proposition~\ref{prop.psl2} yields

\smallskip
(a) $\edloc (\PSL_2(961)) =2$ but
(b) $\ed_{\CC}(\PSL_2(961)) \geqslant 5$.

\smallskip
Now let $q = p = 65537$. Note that $p$ is a Fermat prime, $p = 2^{16} + 1$. Here
Proposition~\ref{prop.psl2} yields 

\smallskip
(a) $\edloc (\PSL_2(65537)) =2$ but (b) $\ed_{\CC}(\PSL_2(65537)) \geqslant 15$. 

\smallskip
In both cases the inequality (b) is conditional on Ledet's conjecture.
\end{example}

\begin{remark} \label{rem3.psl2}
It follows from \cite[Theorem 2]{reichstein-cremona} that for any $d \geqslant 1$ there are only finitely many non-abelian simple finite groups $G$ such that $\ed_{\CC}(G) \leqslant d$. In some ways this assertion is more satisfying than the inequality of Proposition~\ref{prop.psl2}(b): it is unconditional (does not rely on Ledet's conjecture), and it covers all finite simple groups, 
not just those of the form $\PSL_2(q)$. On the other hand, it does not give an explicit lower bound on $\ed_{\CC}(G)$ for any particular 
finite simple group $G$.
\end{remark}

\section{Proof of Theorem~\ref{thm.main}}
\label{sect.proof}

We begin by remarking that an \'etale gerbe $\cX \arr S$ is weakly tame if and only if there exists an \'etale cover $\{S_i \arr S\}$ such that each $\cX_{S_i} \arr S_i$ is equivalent to $\cB_{S_i}G_i \arr S_i$
with $G_i$ weakly tame \'etale group schemes over $S_i$.

Our proof of Theorem~\ref{thm.main} will rely on the following Lemma~\ref{lem.versal}. To state it, we need the notion of \emph{versal object} of an algebraic stack. This is standard for classifying stacks of algebraic groups, but does not seem to be in the literature in the general case, so a short discussion is in order.

Let $\cX \arr \spec F$ be an algebraic stack of finite type over a field. Then $\cX$ preserves inductive limits, in the following sense: if $\{A_{i}\}$ is an inductive system of $F$-algebras over a filtered poset, the induced functor $\indlim \cX(A_{i}) \arr \cX(\indlim A_{i})$ is an equivalence of categories. If $L$ is an extension of $F$ then we can view $L$ as the inductive limit of its subalgebras $R \subseteq K$ of finite type over $F$; hence, given an object $\xi \in \cX(L)$, there exists a finitely generated subalgebra $R \subseteq K$ and an object $\xi_{R} \in \cX(R)$ whose image in $\cX(L)$ is isomorphic to $\xi$.

We say that an object $\xi \in \cX(L)$ is \emph{versal} if it satisfies the following condition, which expresses the fact that every object of $\cX$ over an extension of $F$ can be obtained by specialization of $\xi$.

For any $R$ and $\xi_{R}$ as above, and any object $\eta \in \cX(K)$ over an extension $K$ of $F$ that is an infinite field, there exists a homomorphism of $F$-algebras $R \arr K$ such that the image of $\xi_{R}$ in $\cX(K)$ under the induced functor $\cX(R) \arr \cX(K)$ is isomorphic to $\eta$.

Versal object don't exist in general; for example, they don't exist when $\cX$ has positive-dimensional moduli space. When they do exist, however, they control the essential dimension, that is, $\xi \in \cX(L)$ is versal, then the essential dimension of $\xi$ is easily seen to be the essential dimension of $\cX$ (in other words, no object of $\cX$ defined over a field can have essential dimension larger than that of $\xi$).

\begin{lemma}\label{lem.versal}
Let $\cX_{F} \arr \spec F$ be a finite \'etale gerbe over a field $F$. Suppose that $A$ is a non-zero finite $F$-algebra, and that the morphism $\spec A \arr \spec F$ has a lifting $\phi\colon \spec A \arr \cX_{F}$. Consider the locally free sheaf $\phi_{*}\cO_{\spec A}$ on $\cX_{F}$; call $\cV \arr \cX_{F}$ the corresponding vector bundle on $\cX_{F}$. Then $\cV$ has a non-empty open subscheme $U \subseteq \cV$. Furthermore, if $k(U)$ is the field of rational functions on $U$, the composite $\spec k(U) \arr U \subseteq \cV \arr \cX_{F}$ gives a versal object of $\cX_{F}\bigl(k(U)\bigr)$.
\end{lemma}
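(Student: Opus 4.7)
I would work \'etale-locally on $\cX_F$, where the gerbe trivializes as $\cB G$ for a finite \'etale group scheme $G$, and descend the construction. In this local picture, $\phi\colon \spec A \to \cX_F$ corresponds to a finite \'etale $G$-torsor $P \to \spec A$, and $\phi_*\cO_{\spec A}$ becomes the finite-dimensional $F$-vector space $V = \Gamma(P, \cO_P)$ with its canonical $G$-action. Since $P \to \spec A$ is \'etale-locally trivial as a $G$-torsor, $V \otimes_F \overline F$ is isomorphic as an $\overline F[G]$-module to $(\overline F[G])^{\dim_F A}$, i.e., a direct sum of copies of the regular representation. This yields local freeness of $\phi_*\cO_{\spec A}$ and identifies $\cV$ \'etale-locally with $[V/G]$.

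\smallskip\textbf{Existence of $U$.} The regular representation of $G$ is generically free: its non-free locus is the union of the proper linear subspaces $V^g$ with $g \neq 1$. Consequently the $G$-action on $V$ is generically free, and the quotient $V^{\mathrm{free}}/G$ is an open algebraic space of $[V/G]$ with trivial stabilizers. These local quotients glue to an open substack $U_0 \subseteq \cV$ on which the inertia of $\cV$ is trivial. I then take $U$ to be any non-empty affine open subscheme of $U_0$, which exists because $U_0$ is an algebraic space of finite type.

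\smallskip\textbf{Versality.} Let $\xi \in \cX_F\bigl(k(U)\bigr)$ be the generic object coming from $\spec k(U) \hookrightarrow U \to \cV \to \cX_F$. Given $\eta \in \cX_F(K)$ with $K/F$ an infinite field extension, the pullback $V_\eta \eqdef \cV \times_{\cX_F, \eta} \spec K$ is an affine $K$-space, and $U$ pulls back to an open $U_\eta \subseteq V_\eta$ whose complement is contained in a finite union of proper linear subspaces of $V_\eta$ (the pullbacks of the loci $[V^g/G]$). Since $K$ is infinite, $U_\eta(K) \neq \varnothing$, and any $K$-point furnishes a morphism $\spec K \to U$ whose composite with $U \to \cX_F$ is isomorphic to $\eta$.

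\smallskip\textbf{Refinement and main obstacle.} To verify the full versal property, one must accommodate an arbitrary finitely generated $R \subseteq k(U)$ and an arbitrary extension $\xi_R \in \cX_F(R)$ of $\xi$. My plan is to use smoothness of $\cV \to \cX_F$ to enlarge $R$ to some finitely generated $R' \subseteq k(U)$ for which the generic lift $\spec k(U) \to U$ extends to an open immersion $\spec R' \hookrightarrow U$ compatible with $\xi_R$. Choosing a $K$-point of $U_\eta$ that additionally lies in $\spec R' \subseteq U$ --- possible by the preceding paragraph, since $\spec R'$ is dense in $U$ and $K$ is infinite --- then produces a homomorphism $R' \to K$ whose restriction $R \to K$ specializes $\xi_R$ to $\eta$. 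The delicate step is this last compatibility: an arbitrary $\xi_R$ does not come a priori from any preferred lift of $\xi$ to $U$, and matching it with a chosen lift $\spec R' \to U$ is where the argument is most subtle, relying on smoothness of $\cV \to \cX_F$ to supply the needed flexibility.
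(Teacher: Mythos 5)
Your approach matches the paper's in all essential respects: reduce to $\cB G$, identify the vector bundle with the representation $\Gamma(P,\cO_P)$, observe that over $\overline F$ this contains (in your version, equals a sum of copies of) the regular representation so that the generic stabilizer is trivial, and then use the fact that a nonempty open of affine space over an infinite field has a rational point. The paper's first step is slightly more economical---it only exhibits a copy of the regular representation inside $\cV$ (via the pushforward of $\cO_{\spec F}$ along $\spec F\to\spec A\to\cB G$) and invokes semicontinuity of stabilizer degree, rather than proving $V\otimes\overline F\cong\overline F[G]^{\dim_F A}$---but your more precise structure statement is correct and yields the same conclusion.

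The ``main obstacle'' you flag in the last paragraph is not actually an obstacle, and your proposed plan does go through. The point you are missing is that the spread-out lift is automatically birational. Concretely: fix an isomorphism $\xi_R|_L\cong\xi$ (where $L=k(U)$); via this isomorphism, the tautological map $\spec L\to U$ becomes a lift of $\xi_R|_L$ through $U\to\cX_F$, with underlying morphism of schemes still equal to the generic-point inclusion. Spreading out, this lift is defined over some finitely generated $R'$ with $R\subseteq R'\subseteq L$, giving $\pi\colon\spec R'\to U$ factoring $\xi_R|_{R'}$ through $U$. Since $R'\hookrightarrow L$ is injective and $\pi|_L$ is the generic-point inclusion, $\pi$ sends the generic point of $\spec R'$ to the generic point of $U$, and the induced composite $k(U)\to\operatorname{Frac}(R')\hookrightarrow L=k(U)$ is the identity; hence $\operatorname{Frac}(R')=k(U)$ and $\pi$ is birational. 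After shrinking, $\pi$ restricts to an isomorphism onto a nonempty open $U''\subseteq U$, and applying the $K$-point argument to $U''$ produces a homomorphism $R'\to K$ (hence $R\to K$) specializing $\xi_R$ to $\eta$. No separate appeal to smoothness of $\cV\to\cX_F$ is needed beyond the spreading-out step itself. Note that the paper's own proof is silent on exactly this reduction (it simply asserts ``the argument is standard'' and that it suffices to lift $\eta$ through an arbitrary nonempty open of $\cV$), so your instinct to worry about it was reasonable; the birationality observation above is what makes the reduction legitimate.

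One small inaccuracy in your versality paragraph: the complement of $U_\eta$ in $V_\eta$ is a union of proper linear subspaces only when $U$ is the entire free locus $U_0$; for a general nonempty open $U\subseteq\cV$ (which is what the lemma and the subsequent application require) the complement of $U_\eta$ is just some proper closed subset. This does not affect the conclusion, since all one needs is that $U_\eta$ is a nonempty open of affine space over an infinite field.
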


\begin{proof}
Let us show that $\cV$ is generically a scheme. We can extend the base field $F$, so that it is algebraically closed; in this case $\cX_{F}$ is the classifying space $\cB_{F}G$ of a finite group $G$, and there exists a homomorphism of $F$-algebras $A \arr F$. The vector bundle $\cV \arr \cX_{F}$ corresponds to a representation $V$ of $G$; by the semicontinuity of the degree of the stabilizer for finite group actions, it is enough to show that $V$ has a point with trivial stabilizer. The homomorphism $A \arr F$ gives a morphism $\spec F \arr \spec A$, and the composite $\spec F \arr \spec A \arr \cB_{F}G$ corresponds to the trivial $G$-torsor on $\spec F$. If we call $\cW$ the pushforward of $\cO_{\spec F}$ to $\cB_{F}G$, then $\cW \subseteq \cV$. On the other hand $\cW$ corresponds to the regular representation of $G$, and so the generic stabilizer is trivial, 
which proves what we want.

Let us show that the composite $\spec k(U) \arr U \subseteq \cV \arr \cX_{F}$ is versal; the argument is standard. Suppose that $K$ is an extension of $F$ that is an infinite field, and consider a morphism $\spec K \arr \cX_{F}$. It is enough to prove that for any open subscheme $U \subseteq \cV$, the morphism $\spec K \arr \cX_{F}$ factors through $U \subseteq \cV \arr \cX_{F}$. The pullback $V_{K} \arr \spec K$ of $\cV \arr \cX_{F}$ is a vector space on $K$, and the inverse image $U_{K} \subseteq V_{K}$ of $U \subseteq \cV$ is a non-empty open subscheme; hence $U_{K}(K) \neq \emptyset$, which ends the proof.
\end{proof}

\begin{proof}[Proof of Theorem~\ref{thm.main}] Let $\widehat{R}$ be the completion of $R$ and $\widehat{K}$ be the fraction field of 
	$\widehat{R}$. Then clearly $K \subset \widehat{K}$ and thus $\ed_{K}(\cX_K) \geqslant \ed_{\widehat{K}}(\cX_{\widehat{K}})$. Thus
for the purpose of proving Theorem~\ref{thm.main}, we may replace $R$ by $\widehat{R}$. In other words, we may (and will) assume that $R$ is complete.

Let $R \arr A$ be an \'etale faithfully flat algebra such that $\cX(A) \neq \emptyset$; since $R$ is henselian, by passing to a component of $\spec A$ we can assume that $R \arr A$ is finite. An object of $\cX(A)$ gives a lifting $\phi\colon \spec A \arr \cX$; this is flat and finite. Let $\cV \arr \cX$ be the vector bundle corresponding to $\phi_{*}\cO_{\spec A}$. If $U \arr \cV$ is the largest open subscheme of $\cV$, the Lemma above implies that $U \arr \spec R$ is surjective. Denote by $U_{K}$ and $U_{k}$ respectively the generic and special fiber of $U \arr \spec R$; call $E$ and $E_{0}$ the fields of rational functions on $U_{K}$ and $U_{k}$ respectively. Again because of the Lemma, the objects $\xi\colon \spec E \arr \cX_{K}$ and $\xi_{0}\colon \spec E_{0} \arr \cX_{k}$ are versal.

Consider the local ring $\cO_{E}$ of $U$ at the generic point of $U_{k}$, which is a DVR. The residue field of $\cO_{E}$ is $E_{0}$, and we have a morphism $\Xi\colon \spec\cO_{E} \arr \cX$ whose restrictions to $\spec K$ and $\spec k$ are isomorphic to $\xi$ and $\xi_{0}$ respectively.

Set $m \eqdef \ed_{K}\cX_{K}$; we need to show that $\xi_{0}$ has a compression of transcendence degree at most $m$.

There exists a field of definition $K \subseteq L \subseteq E$ for $\xi$ such that $\trdeg_{K}L = m$; call $\theta\colon \spec L \arr \cX$ the corresponding morphism, so that we have a factorization $\spec E \arr \spec L \xarr{\theta} \cX$ for $\xi$. Consider the intersection $\cO_{L}\eqdef \cO_{E} \cap L \subseteq E$; then $\cO_{L}$ is a DVR with quotient field $L$. Call $L_{0}$ it residue field; we have $L_{0} \subseteq E_{0}$. By Lemma~\ref{lem.valuation},
$\trdeg_{k}L_{0} \leqslant\trdeg_{K}L$.

Now it suffices to show that $\xi_{0}\colon \spec E_{0} \arr \cX$ factors through $\spec L_{0}$. Assume that we have proved that the morphism $\theta\colon \spec L \arr \cX$ extends to a morphism $\Theta\colon \spec \cO_{L} \arr \cX$. The composite $\spec E \subseteq \spec \cO_{E} \xarr{\Xi} \cX$ is isomorphic to the composite $\spec E \arr \spec L \subseteq \spec\cO_{L} \xarr{\Theta} \cX$; since $\cX$ is separated, it follows from the valuative criterion of separation that the composite $\spec \cO_{E} \arr \spec \cO_{L} \xarr{\Theta} \cX$ is isomorphic to $\Xi\colon \spec \cO_{E} \arr \cX$. By restricting to the central fibers we deduce that $\xi_{0}\colon \spec E_{0} \arr \cX$ is isomorphic to the composite $\spec E_{0} \arr \spec L_{0} \arr \cX$, and we are done.

To prove the existence of the extension $\Theta\colon \spec \cO_{L} \arr \cX$, notice that the uniqueness of such extension implies that to prove its existence we can pass to a finite \'etale extension $R \subseteq R'$, where $R'$ is a DVR; it is straightforward to check that formation of $\cO_{L}$ and $\cO_{E}$ commutes with such a base change. Hence we can assume that $\cX$ has a section, so that $\cX = \cB_{R}G$, where $G\arr \spec R$ is a finite \'etale weakly tame group scheme. By passing to a further covering we can assume that $G \arr \spec R$ is constant, that is, the product of $\spec R$ with a finite group $\Gamma$. If $A$ is an $R$-algebra, an action of $G$ on $\spec A$ corresponds to an action of $\Gamma$. 

The vector bundle $\cV \arr \cX$ corresponds to a vector bundle $V_{R} \arr \spec R$ with an $R$-linear action of $\Gamma$, such that the induced representations of $\Gamma$ on $V_{K}$ and $V_{k}$ are faithful. Call $\widetilde{E}$ the function field of $V_{K}$ and $\widetilde{E}_{0}$ the function field of $V_{k}$; then $\widetilde{E}^{\Gamma} = E$, and therefore $\cO_{\widetilde{E}}^{\Gamma} = \cO_{E}$. The factorization $\spec E\arr \spec L \arr \cX$ gives a $\Gamma$-torsor $\spec \widetilde{L} \arr \spec L$ whose lift to $\spec E$ is isomorphic to $\spec\widetilde{E} \arr \spec E$; then $\widetilde{L}$ is a $\Gamma$-invariant subfield of $\widetilde{E}$. Then $\cO_{\widetilde{L}} \eqdef \widetilde{L} \cap \cO_{\widetilde{E}}$ is a $\Gamma$-invariant DVR, and $\cO_{\widetilde{L}}^{\Gamma} = \widetilde{L}^{\Gamma} \cap \cO_{\widetilde{E}}^{\Gamma} = \cO_{L}$.

Call $\frm_{\widetilde{L}} \subseteq \cO_{\widetilde{L}}$ the maximal ideal, and set $\widetilde{L}_{0} \eqdef\cO_{\widetilde{L}}/\frm_{\widetilde{L}}$. If $t \in R$ is the uniformizing parameter, the image of $t$ in $\cO_{\widetilde{L}}$, which we denote again by $t$, is a uniformizing parameter; this is $\Gamma$-invariant. The action of $\Gamma$ on $\cO_{\widetilde{L}}$ descends to an action of $\Gamma$ on $\widetilde{L}_{0}$. By Proposition~\ref{prop.faithful}, this action is faithful.


So the action of $\Gamma$ on $\spec L_{0}$ is free over $k$; this implies that the action of $\Gamma$ on $\spec\cO_{\widetilde{L}} \arr \spec R$ is free, so $\spec\cO_{\widetilde{L}} \arr (\spec\cO_{\widetilde{L}})/G = \spec \cO_{L}$ is a $\Gamma$-torsor. This gives the desired morphism $\Theta\colon \spec\cO_{L} \arr \cX$, and ends the proof of the Theorem.
\end{proof}

\begin{remark}\label{remex}  
The problem with the proof of~\cite[Theorem 5.11]{brosnan-reichstein-vistoli3} was in the last sentence of the second paragraph
on page 1094.  We claimed there that 
the discrete valuation ring $R$ in the proof can be replaced
with the ring called $W(k(s))$.  
Since the essential dimension of the generic point can go up when we make 
this replacement, this is, in fact, not allowable.  (In effect, our mistake 
boils down to using an inequality in the wrong direction.)   

Note also that the proof of the characteristic $0$ genericity theorem 
in~\cite{brv1a} does not rely on Theorem~\ref{thmbrv3}. For that argument, 
which was different from the proof 
of~\cite[Theorem 6.1]{brosnan-reichstein-vistoli3}, 
see~\cite[Theorem 4.1]{brv1a}.
\end{remark}


\bibliographystyle{amsalpha}
\bibliography{ed-weakly-tame}
\end{document}